\author{Helena Kremp}
\newtheorem{theorem}{Theorem}[section]
\newtheorem{definition}[theorem]{Definition}     
\newtheorem{proposition}[theorem]{Proposition}
\newtheorem{lemma}[theorem]{Lemma}	
\newtheorem{assumption}[theorem]{Assumption}
\crefname{assumption}{Assumption}{Assumptions}	
\newtheorem{corollary}[theorem]{Corollary}
\newtheorem{example}[theorem]{Example}
\newtheorem{remark}[theorem]{Remark}
\numberwithin{equation}{section}
\newcommand{\R}{\mathbb{R}} 
\newcommand{\N}{\mathbb{N}} 
\newcommand{\E}{\mathds{E}} 
\newcommand{\p}{\mathds{P}} 
\newcommand{\F}{\mathcal{F}} 
\renewcommand{\mathcal}[1]{\mathscr{#1}}
\renewcommand{\epsilon}{\varepsilon}
\newcommand{\assign}{:=}
\DeclarePairedDelimiter{\abs}{\lvert}{\rvert}
\DeclarePairedDelimiter{\norm}{\lVert}{\rVert}
\DeclarePairedDelimiter{\paren}{(}{)}
\begin{document}
\title{Weak error analysis for a nonlinear SPDE approximation of the Dean-Kawasaki equation}

\author{Ana Djurdjevac \footnote{Freie Universität Berlin, Arnimallee 6, 14195 Berlin, Germany; 	adjurdjevac@zedat.fu-berlin.de} \and Helena Kremp \footnote{Freie Universität Berlin, Arnimallee 7, 14195 Berlin, Germany; helena.kremp@fu-berlin.de} \and Nicolas Perkowski \footnote{Freie Universität Berlin, Arnimallee 7, 14195 Berlin, Germany; perkowski@math.fu-berlin.de}}

\maketitle
\begin{abstract}
\noindent We consider a nonlinear SPDE approximation of the Dean-Kawasaki equation for independent particles. Our approximation satisfies the physical constraints of the particle system, i.e. its solution is a probability measure for all times (preservation of positivity and mass conservation). Using a duality argument, we prove that the weak error between particle system and nonlinear SPDE is of the order $N^{-1-1/(d/2+1)}\log N$. Along the way we show well-posedness, a comparison principle, and an entropy estimate for a class of nonlinear regularized Dean-Kawasaki equations with It\^o noise.\\
\smallskip
\noindent \textit{Keywords: Dean-Kawasaki equation, weak error analysis, Laplace duality} \\
\textit{MSC2020: 60H17, 60H35}
\end{abstract}

\begin{section}{Introduction}
The Dean-Kawasaki (DK) equation, named after \cite{Dean1996} and \cite{Kawasaki}, is a stochastic partial differential equation (SPDE) for the evolution of the empirical measure of particles following Langevin dynamics with pairwise interaction. It is formally given by
\begin{align}\label{eq:DK-interaction}
\partial_{t}u=\frac{1}{2}\Delta u+\nabla\cdot(u (\nabla W\ast u))+\frac{1}{\sqrt{\alpha}}\nabla\cdot (\sqrt{u}\xi),
\end{align}
where $\xi$ is a vector-valued space-time white noise and $\alpha>0$. In \cite{Dean1996}, \eqref{eq:DK-interaction} is formally derived as a closed equation for the dynamics of the empirical measure of mean-field interacting diffusions: Let $(X^{i})_{i=1}^{N}$ solve the Langevin dynamics with interaction potential $W$, that is
\begin{align}\label{eq:Langevin}
dX^{i}_{t}=-\frac{1}{N}\sum_{j=1}^{N}\nabla W(X^{i}_{t}-X^{j}_{t})dt+dB^{i}_{t}
\end{align}
for $N$ independent Brownian motions $(B^{i})_{i=1}^{N}$. Dean \cite{Dean1996} argues that the empirical measure $\mu^N_t = \frac1N \sum_{i=1}^N \delta_{X^i_t}$ solves~\eqref{eq:DK-interaction} with $\alpha=N$. This is based on an ad-hoc replacement of the (stochastic integral against the) Brownian noise by a (stochastic integral against a) space-time white noise which formally has the same law.

\noindent Due to the singular gradient noise and because of the square root nonlinearity, the mathematical meaning of \eqref{eq:DK-interaction} is dubious, although there has been a lot of progress in recent years, see the literature review below. Nonetheless, the Dean-Kawasaki equation is very useful for applications and SPDE models of DK-type are very popular in physics (e.g.~\cite{Marconi2000,kreimer,Goddard, delfau2016,dejardin2018}), where they are also known as (stochastic) dynamical density functional theory, see the recent survey \cite{teVrugt2020}.

\noindent The motivation for this work is the successful use of the Dean-Kawasaki equation as a computational tool for simulations of fluid dynamics \cite{Donev2014} or social dynamics~\cite{DjCDjK, HDjCDjS}. If $N$ is large and if the particles live in a low-dimensional space, a discretization of \eqref{eq:DK-interaction} can be computationally much cheaper than simulating $N$ particles with their interactions. This is particularly interesting for moderate sizes of $N$, where the hydrodynamic limit ($N \to \infty$) would miss important stochastic fluctuations; for example, in social dynamics it is interesting to consider $N=O(10^4-10^5)$. However, because of the singular nature of the DK equation and because of its fragile solution theory (see the literature section below), it is far from obvious why a discretization of \eqref{eq:DK-interaction} should  give meaningful results for \eqref{eq:Langevin}.

\noindent To understand this on a conceptual level, we focus on the case without interaction ($W=0$) and we introduce a regularized version of the DK-equation, where we truncate the noise, we  mollify the square root nonlinearity and we consider on $\R_+ \times \mathbb T^d$ the equation
\begin{align}\label{eq:rDK}
d \tilde \mu^{N}_{t}=\frac{1}{2}\Delta \tilde \mu^{N}_{t}dt+\frac{1}{\sqrt{N}}\sum_{\abs{k}\leqslant M_{N}}\nabla (f_{\delta_{N}}(\tilde \mu_{t}^N)e_{k})\cdot dB_{t}^{k}
\end{align}
for the Fourier-basis $(e_k)_{k \in \mathbb Z^d}$, for a Lipschitz function $f_{\delta_N}$ which approximates the square root, and for $d$-dimensional complex Brownian motions $(B^k)_{k \in \mathbb Z^d}$. Under appropriate conditions we prove the well-posedness of this approximate DK-equation. The equation is not locally monotone in the sense of \cite{Liu2015} and the nonlinear It\^o gradient noise is quite tricky (much more so than Stratonovich noise), therefore we prove well-posedness through a suitable transformation of the equation in combination with a priori energy bounds. Furthermore, we prove a comparison principle which yields non-negativity of the solution for non-negative initial data and the conservation of the $L^{1}$-norm. This is important because $\tilde \mu^N$ should represent the empirical distribution $\mu^N$ of particles, which is a probability measure by the mass conservation of the particle system. Our main result concerns the quality of approximation of $\mu^N$ by $\tilde \mu^N$. We do not expect that $\tilde \mu^N$ and $\mu^N$ are pathwise close and we do not even couple the noises in their dynamics to each other. Instead, we focus on the weak approximation quality, and we show that for suitable test functions $F$
\begin{align}
	\abs{\E[F(\tilde{\mu}^{N}_{t})]-\E[F(\mu^{N}_{t})]}\lesssim N^{-1-\frac{1}{d/2+1}}(t+\log(N)).
\end{align}
Since the equation for $\tilde \mu^N$ should be amenable to discretization, this gives a partial justification for the use of DK-type equations in numerical simulations. Although our methods crucially rely on the absence of interactions because then we can compute $\E[F(\mu^{N}_{t})]$ explicitly by duality arguments (cf.~\cite{Konarovskyi2019}), we expect similar results to hold in the interacting case, and we expect that our general approach is quite powerful and can possibly be generalized to the interaction case.

\noindent The paper is structured as follows. Below we review some mathematical literature on the Dean-Kawasaki equation and we discuss why its solution theory is subtle and some recent breakthroughs. \cref{sec:dk-pre} introduces the notation and assumptions for our approximate Dean-Kawasaki equation, and we state the main results. In \cref{sec:dk-well,sec:dk-com} we prove the well-posedness and a comparison principle for regularized Dean-Kawasaki-type equations, which in particular applies to our approximation \eqref{eq:rDK}. \cref{sec:error-est} provides the weak error estimates.

\paragraph{Some mathematical literature on Dean-Kawasaki type equations}

\noindent Giving a pathwise meaning to the DK equation \eqref{eq:DK-interaction} is an open problem even in the case without interaction potential, $W=0$. The equation is highly singular because the noise is effectively a derivative of the space-time white noise and thus extremely irregular, and therefore the solution can only be a distribution and not a function and thus the square root $\sqrt{\mu^N_t}$ and the product $\sqrt{\mu^N_t}\xi$ have no meaning. Even in dimension $d=1$, the equation is scaling supercritical in the language of regularity structures (\cite{Hairer2014}) and paracontrolled distributions (\cite{Gubinelli2015Paracontrolled}), which are techniques to tackle subcritical singular SPDEs.

\noindent Entropy solutions of approximate DK-type equations with Stratonovich noise $\sum_{k}\nabla \cdot \sigma^{k}(x,u)\circ dB^{k}_{t}$ were studied in \cite{dareiotis2020nonlinear}. Using a pathwise approach via rough paths techniques, \cite{fehrman2019well} established well-posedness for porous media equations (replacing $\Delta u$ by $\Delta (\abs{u}^{m-1}u)$ with exponent $m\in (0,\infty)$), again for Stratonovich noise. Both of these results require very regular diffusion coefficients $\sigma$.
The well-posedness of the Dean-Kawasaki equation with truncated Stratonovich space-time white noise and with square-root diffusion coefficient was recently shown in \cite{fehrman2021well}, where the authors establish existence and uniqueness of stochastic kinetic solutions. The Stratonovich noise enables them to obtain a priori entropy-type estimates on the kinetic solutions (cf. \cite[Section 5.1]{fehrman2021well}), which yield the relative compactness of solutions with regularized square root nonlinearity. We refer to \cite{fehrman2021well} and the references therein for more details on the well-posedness of related equations.

\noindent Furthermore, the Dean-Kawasaki equation is related to scaling limits of interacting particle systems, see e.g. \cite{giacomin1998}. \cite{fg-ldp} show that the Stratonovich DK equation correctly predicts the large deviation rate function for non-equilibrium fluctuations of the zero range process.

\noindent Dean-Kasawaki type equations with regularized square root and truncated It\^o noise have been considered also in \cite{Bechtold}, where the existence of weak solutions is shown.

\noindent Regularized Dean-Kawasaki equations for underdamped kinetic particles were investigated in \cite{cornalba2019regularized, Cornalba2020-reg}. The authors impose a cutoff on the noise, which is formally justified by an approximation of the Dirac deltas in $\mu^N_t = \sum_{i=1}^N \delta_{X^i_t}$ with mollifying kernels. They establish the existence of mild solutions with high probability.

\noindent The mathematically correct interpretation of the ``full'' (not truncated) Dean-Kawasaki equation was found only recently~\cite{Konarovskyi2019, Konarovskyi2020}. The authors show that~\eqref{eq:DK-interaction} should be interpreted as a martingale problem, and that by similar duality arguments as for superprocesses~\cite{Etheridge2000} uniqueness holds for this martingale problem. In fact, the authors prove much more. Namely, at least for bounded interaction potentials, the Dean-Kawasaki equation has a unique martingale solution if and only if the parameter $N$ is a natural number and the initial condition is an atomic probability measure (given by the sum of $N$ Dirac measures). If we replace $\mu_{0}^{N}$ by an initial condition $\mu_{0}\in\mathcal{M}$ that is not of the form $\frac1N \sum_{i=1}^N \delta_{y^i}$, or if we replace $N$ by a constant $\alpha\in\R^{>0}\setminus\N$, then there exists no martingale solution. In particular, the well-posedness is very fragile with respect  to changes of the parameters, and as such, the equation is not suitable for a stable numerical approximation. 

\noindent From a numerical perspective, this instability in the parameter $N$ is worrisome. That is, slight changes in $N$ yield an ill-posed problem and possibly to large numerical errors. Nonetheless, numerical schemes 
for Dean-Kawasaki-type equations were considered in \cite{CornalbaShardlow, cornalba2021dean}. \cite{CornalbaShardlow} introduce a discontinuous Galerkin scheme for the regularized DK equation from \cite{cornalba2019regularized,Cornalba2020-reg}. The work most related to ours is Cornalba-Fischer \cite{cornalba2021dean}, who consider finite element and finite difference approximations of \eqref{eq:DK-interaction} without interaction ($W=0$) and who also prove weak error estimates. Their weak distance is parametrized by the Sobolev regularity of the test functions and the rate measured in their distance can be arbitrarily high, only limited by the numerical error and the error coming from the negative part of the approximation. However, the authors do not prove positivity for the approximations (hence the consideration of the negative part). Additionally they impose a strong assumption (\cite[Assumption FD4]{cornalba2021dean}) on the existence of lower and upper bounds for the solution of a discrete heat equation, which means that initially the particles must be fairly spread out. Our results are orthogonal in the sense that we consider an SPDE approximation and not a discrete model, and we do not have any restrictions on the initial distribution of particles. Our convergence rate is upper bounded by $N^{-5/3}$ in $d=1$ and it gets worse in higher dimensions, which is due to constraints on the choice of parameters coming from the solution theory of the SPDE. Our methods are very different (Laplace duality/Kolmogorov backward equation compared to a clever recursive scheme and choice of distance in \cite{cornalba2021dean}).
\end{section}

\begin{section}{Preliminaries and statement of the main results}\label{sec:dk-pre}
Consider the particle system of $N \in \N$ independent standard $d-$dimensional Brownian motions $(X^{i})_{i=1}^{N}$ projected onto the torus $\mathbb{T}^{d}=\R^{d}/\mathbb{Z}^{d}$ (with generator being the Laplacian with periodic boundary conditions) 
started at $X^{i}_{0}=x^{i}\in\mathbb{T}^{d}$. We are interested in the empirical measure
\begin{equation}\label{def:muN}
\mu^{N}_{t}:=\frac{1}{N}\sum_{i=1}^{N}\delta_{X^{i}_{t}}.
\end{equation}
\noindent Applying It\^{o}'s formula  to $\langle\mu^{N}_{t},\varphi\rangle=\frac{1}{N}\sum_{i=1}^{N}\varphi(X_{t}^{i})$, $\varphi\in C^{\infty}(\mathbb{T}^{d})$, we see that $\mu^N$ formally solves the Dean-Kawasaki equation without interaction and with atomic initial condition,
\begin{equation}\label{eq:DK}
\begin{aligned}
\partial_{t}\mu^N &=\frac{1}{2}\Delta\mu^N+\frac{1}{\sqrt{N}}\nabla\cdot (\sqrt{\mu^N}\xi)\\
\mu_{0}^{N} &=\frac{1}{N}\sum_{i=1}^{N}\delta_{x^{i}},
\end{aligned}
\end{equation}
where $\xi :=(\xi^{j})_{j=1,\dots,d}$ with independent space-time white noise processes $\xi^{j}$, see \cite{Dean1996}.  As in \cite[Definition 2.1]{Konarovskyi2019} we will interpret \eqref{eq:DK} as a martingale problem. The state space of the solution process is the space $\mathcal{M}$ of probability measures on $\mathbb{T}^{d}$, equipped with the topology of weak convergence. 
For $\mu\in\mathcal{M}$ and $\varphi\in C(\mathbb{T}^{d})$, we write $\mu(\varphi)=\langle\mu,\varphi\rangle=\int_{\mathbb{T}^{d}}\varphi d\mu$.
\begin{definition}\label{def:DK}
We call a stochastic process $(\mu_t^{N})_{t\geqslant 0}$ on a complete filtered probability space $(\Omega, \F, (\F_{t})_{t\geqslant 0},\p)$ with values in $C(\R_+,\mathcal{M})$
a solution to \eqref{eq:DK} if for any test function $\varphi \in C^{\infty}(\mathbb{T}^d)$, the process
\begin{align*}
t\mapsto\langle\mu_{t}^{N},\varphi\rangle-\langle\mu_{0}^{N},\varphi\rangle-\int_{0}^{t}\langle\mu_{s}^{N},\frac{1}{2}\Delta\varphi\rangle ds
\end{align*}
is an $(\F_{t})$--martingale with quadratic variation 
\begin{equation}
\frac{1}{N}\int_{0}^{\cdot}\langle\mu_{s}^{N},\abs{\nabla\varphi}^{2}\rangle ds.
\end{equation}
\end{definition}

\noindent By \cite[Theorem 2.2]{Konarovskyi2019} this martingale problem has a unique (in law) solution given by (\ref{def:muN}). And if in \eqref{eq:DK} we replace $\mu_{0}^{N}$ by an initial condition $\mu_{0}\in\mathcal{M}$ that is not of the form $\frac1N \sum_{i=1}^N \delta_{y^i}$, or if we replace $N$ by a constant $\alpha\in\R^{>0}\setminus\N$, then there exists no martingale solution to the equation. In particular, the well-posedness is very fragile with respect  to changes of the parameters. Our goal is to approximate the equation  \eqref{eq:DK} in a controlled manner with an equation that has good stability properties and that preserves the physical constraints of $\mu^N$, i.e. positivity and mass conservation.
	
\noindent We aim for a bound of the form
\begin{align*}
\abs{\E[F(\tilde{\mu}^{N}_{t})]-\E[F(\mu^{N}_{t})]}\lesssim_{F,t} N^{-\alpha_{d}},
\end{align*} 
for $t>0$ and for suitable nonlinear test functions $F : \mathcal M \to \R$. As usual, $a \lesssim b$ means that there exists a constant $C>0$ (not depending on the relevant parameters; above the parameters are $\delta,x$), such that $a\leqslant C b$. If we want to indicate the dependence of the constant $C(\kappa)$ on a parameter $\kappa$, we write $a\lesssim_{\kappa}b$. We will see that for $F(\mu):=\exp(\langle\mu,\varphi\rangle)$, $\varphi\in C^{\infty}(\mathbb{T}^{d})$ the expectation $\E[F(\mu^{N}_{t})]$ can be computed in closed form and therefore we consider such $F$. 
\noindent Due to the factor $1/N$ in front on the quadratic variation, a direct computation shows that the hydrodynamic/mean-field limit, that is, the solution $\rho$ of the heat equation
\begin{align*}
\partial_{t}\rho=\frac{1}{2}\Delta\rho,\qquad \rho_0 = \mu^N_0,
\end{align*} achieves a rate of convergence $\alpha_{d}=1$. However, $\rho$ is deterministic and it does not capture random fluctuations in the particle system $\mu^N$. For the Gaussian approximation $\hat{\rho}$ of the fluctuations of the particle system around the hydrodynamic limit, that is,
\begin{align*}
\partial_{t}\hat{\rho}=\frac{1}{2}\Delta\hat{\rho}+\nabla\cdot (\sqrt{\rho}\xi),\qquad \hat \rho_0 = 0,
\end{align*}
with the same vector-valued space-time white noise $\xi$ as above, we can prove that $\rho + \frac{1}{\sqrt N} \hat \rho$ achieves a weak error of $\alpha_{d}=3/2$. But $\rho + \frac{1}{\sqrt N} \hat \rho$ is a distribution of regularity $H^{-\frac{d}{2}-\epsilon}$, it is not a measure, let alone positive, and it does not conserve the mass of the initial condition (in fact $\norm{\rho_t + \frac{1}{\sqrt N} \hat \rho_t}_{L^1} = \infty$ for all $t > 0$). 

\noindent Therefore, we consider a nonlinear approximation of \eqref{eq:DK}: We replace the non-Lipschitz square root function with a Lipschitz approximation that will depend on a parameter $\delta$ and we replace the noise by its ultra-violet cutoff at frequencies of order $M$. The parameters $\delta,M$  will influence the order of the approximation and they will be chosen subsequently in the error estimate, depending on $N$. Let for now $\delta=\delta_{N}>0$ and $M= M_N \in \N$. We then define the Lipschitz function $f = f_{\delta}$ as follows
\begin{equation}\label{def:f}
f(x) = 
\begin{cases} 
      \frac{1}{\sqrt{\delta}}x & \abs{x}\leqslant \delta/2, \\
      \text{smooth} &   \delta/2\leqslant\abs{x}\leqslant \delta, \\
   \operatorname{sign}(x)\sqrt{\abs{x}} &\abs{x}\geqslant \delta.\\
\end{cases}
\end{equation}
The smooth interpolation should be such that $f\in C^{1}(\mathbb{R})$ satisfies  
\begin{align}\label{eq:f-prime-bounds}
\norm{f'}_{L^{\infty}}\lesssim\frac{1}{\sqrt{\delta}}, \quad 
\abs{f'(x)}\lesssim\frac{1}{\sqrt{x}},\text{ for all } x>0
\end{align}
and 
\begin{align}\label{eq:f-bounds}
\abs{f(x)}\lesssim\sqrt{\abs{x}}, \quad 
\abs{f(x)^{2}-x}\lesssim\delta, \text{ for all } x\geqslant 0.
\end{align} 
Any $C^{1}$ approximation of the square root satisfying those bounds works for our analysis and a particular example of such a function is given in the following example.
\begin{example}
Consider, for example,
\begin{equation}
f(x) = 
\begin{cases} 
      \frac{1}{\sqrt{\delta}}x & \abs{x}\leqslant \delta/2, \\
      - \frac{2 \sqrt{\delta}}{\delta^3} x^3 +  \operatorname{sign}(x)\frac{4}{\delta \sqrt{\delta}} x^2 -  \frac{3}{2 \sqrt{\delta}} x  + \operatorname{sign}(x)\frac{\sqrt{\delta}}{2} &   \delta/2\leqslant\abs{x}\leqslant \delta, \\
   \operatorname{sign}(x)\sqrt{\abs{x}} &\abs{x}\geqslant \delta.\\
\end{cases}
\end{equation}
It is not hard to see that $f\in C^{1}$ and that $f$ satisfies the bounds \eqref{eq:f-prime-bounds} and \eqref{eq:f-bounds}. 
\end{example}

\noindent We denote by $\tilde{\mu}^{N}$ the solution of the approximated equation 
\begin{equation}\label{def:app_mu}
\begin{aligned}
d\tilde{\mu}^{N}_{t} &=\frac{1}{2}\Delta\tilde{\mu}^{N}_{t}dt+\frac{1}{\sqrt{N}}\nabla\cdot (f(\tilde{\mu}^{N}_{t})dW^{N}_{t}), \\
\tilde{\mu}_{0}^{N} &=\rho^{N}\ast\mu_{0}^{N}\in L^{2}(\mathbb{T}^d),
\end{aligned}
\end{equation} 
where $(\rho^{N})_N$ is an appropriate approximation of the identity, that we will choose later. 
Moreover, the truncated noise $(W_t^N)_t$ is given by
\begin{equation}\label{app_noise}
W^{N}_{t}(x):=\sum_{\abs{k}\leqslant M_{N}}e_{k}(x)B_{t}^{k}:=\sum_{\abs{k}\leqslant M_{N}}\exp(2\pi i k\cdot x)B_{t}^{k}
\end{equation}
for $x\in\mathbb{T}^{d}$ and independent $d$-dimensional complex-valued Brownian motions $(B^{k})_{k\in\mathbb{Z}^{d}}$  (that is, $B^{k}=B^{k,1}+iB^{k,2}$ for independent $\R^{d}$-valued Brownian motions $B^{k,1},B^{k,2}$) with constraint $\overline{B^{k}}=B^{-k}$,
and a truncation parameter $M_{N}\in\N$, that will be chosen depending on $N$ for the error estimate.\\ 
In the next section we prove the strong well-posedness of slightly more general equations than \eqref{def:app_mu}, as well as the non-negativity of the strong solution and the mass conservation property. But first let us formulate a summary of our main results:

\begin{theorem}[Summary of the main results]
	Let $\tilde \mu^N_0 \in L^2$ be positive and let $M_N$ and $\delta_N $ be such that $\frac{C_N(2M_{N}+1)^{d}}{N\delta_N} < 1$, where $C_N>0$ is such that $\|f'\|_\infty^2 \leqslant \frac{C_N}{\delta_N}$. Then there exists a unique solution $\tilde \mu^N$ to \eqref{def:app_mu}, which is positive and which satisfies $\norm{\tilde \mu^N_t}_{L^1(\mathbb T^d)} = \norm{\tilde \mu^N_0}_{L^1(\mathbb T^d}$ for all $t \geqslant 0$, as well as the entropy bound for $\lambda:=\frac14(1-\frac{C_N(2M_{N}+1)^{d}}{N\delta_N})$:
	\begin{equation}
		\sup_{t\in[0,T]}\E\bigg[\int\tilde{\mu}^{N}_{t}\log(\tilde{\mu}^{N}_{t})\bigg]+\lambda\int_{0}^{T}\E\bigg[\int\frac{\abs{ \nabla\tilde{\mu}^{N}_{t}}^{2}}{\tilde{\mu}^{N}_{t}}\bigg]dt \lesssim \int\tilde{\mu}^{N}_{0}\log(\tilde{\mu}^{N}_{0}) + \frac{T M_{N}^{d+2}}{N},
	\end{equation}
	If $M_N = \delta_N^{-1/2}$ and $\delta_N \simeq N^{-\frac{1}{d/2+1}}$ with $\sup_N \frac{C_N(2M_{N}+1)^{d}}{N\delta_N} < 1$, and if $\mu^N$ is the martingale solution of \eqref{eq:DK} and if $\tilde \mu^N_0$ is an approximation of $\mu^N_0$ as in Lemma~\ref{lem:initial-condition-entropy}, then for any $t>0$, $\varphi\in C^{\infty}(\mathbb{T}^{d})$ and $F(\mu):=\exp(\langle\mu,\varphi\rangle)$ for $\mu\in\mathcal{M}$, the following weak error bound holds:
	\begin{equation}
		\abs{\E[F(\tilde{\mu}^{N}_{t})]-\E[F(\mu^{N}_{t})]}\lesssim_{\varphi} N^{-1-\frac{1}{d/2+1}}(t+\log(N)).
	\end{equation}
\end{theorem}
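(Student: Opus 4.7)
The theorem consolidates three results, which I would prove by reducing to material developed later: the well-posedness, positivity, and mass conservation of $\tilde\mu^N$ follow from the comparison principle and well-posedness theory of \cref{sec:dk-well,sec:dk-com} applied to \eqref{def:app_mu}. The key ingredients there are an exponential-type transformation that bypasses the lack of local monotonicity of the It\^o gradient noise (yielding strong solutions by energy/Galerkin arguments); a comparison principle for regularized DK-type equations with Lipschitz $f$ vanishing at $0$, giving $\tilde\mu^N\geqslant 0$; and testing against $\varphi\equiv 1$ together with $\int\nabla\cdot(f(\tilde\mu^N)dW^N)=0$, giving mass conservation.

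For the \textbf{entropy estimate}, I would apply It\^o's formula to $\Phi(\tilde\mu^N)=\int\tilde\mu^N\log\tilde\mu^N\,dx$. The Laplacian drift contributes $-\tfrac12\int|\nabla\tilde\mu^N|^2/\tilde\mu^N$ by integration by parts against $1+\log\tilde\mu^N$, and the It\^o correction equals
\begin{align*}
\frac{1}{2N}\sum_{\abs{k}\leqslant M_N}\int\frac{|\nabla(f(\tilde\mu^N)e_k)|^2}{\tilde\mu^N}\,dx.
\end{align*}
Expanding $\nabla(f(\tilde\mu^N)e_k)=f'(\tilde\mu^N)\nabla\tilde\mu^N e_k+f(\tilde\mu^N)\nabla e_k$, the cross term cancels after summation by the symmetry $e_{-k}=\overline{e_k}$ and $\sum_{\abs{k}\leqslant M_N}k=0$; the first diagonal term is bounded by $\tfrac{C_N(2M_N+1)^d}{\delta_N}\int|\nabla\tilde\mu^N|^2/\tilde\mu^N$ and is absorbed into the Laplacian dissipation, leaving the prefactor $\lambda$ as defined; and the second diagonal term uses $|f(x)|^2\lesssim x$ with $\sum_{\abs{k}\leqslant M_N}|\nabla e_k|^2\lesssim M_N^{d+2}$ to yield the residual $TM_N^{d+2}/N$.

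For the \textbf{weak error}, I would use a Kolmogorov backward / Laplace duality argument. Since the $X^i$ in \eqref{eq:Langevin} are independent, a direct computation shows
\begin{align*}
u^N(t,\mu)\assign\E[e^{\langle\mu^N_t,\varphi\rangle}\mid\mu^N_0=\mu]=e^{\langle\mu,\varphi^N_t\rangle},\qquad \varphi^N_t\assign N\log P_t[e^{\varphi/N}],
\end{align*}
where $\varphi^N_t$ satisfies the Cole--Hopf transformed heat equation $\partial_t\varphi^N_t=\tfrac12\Delta\varphi^N_t+\tfrac{1}{2N}|\nabla\varphi^N_t|^2$, so that $u^N$ solves the backward equation $\partial_t u^N=\mathcal{L}^N u^N$ for the generator $\mathcal{L}^N$ of \eqref{eq:DK}. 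Applying It\^o's formula to $s\mapsto u^N(t-s,\tilde\mu^N_s)$ and taking expectations yields
\begin{align*}
\E[F(\tilde\mu^N_t)]-u^N(t,\tilde\mu^N_0)=\E\int_0^t(\tilde\mathcal{L}^N-\mathcal{L}^N)u^N(t-s,\tilde\mu^N_s)\,ds,
\end{align*}
where $\tilde\mathcal{L}^N$ denotes the generator of \eqref{def:app_mu}. Writing $\psi_s=\varphi^N_{t-s}$, the integrand factors as $\tfrac{u^N(t-s,\tilde\mu^N_s)}{2N}$ times a ``square-root error'' $\int(f(\tilde\mu^N_s)^2-\tilde\mu^N_s)|\nabla\psi_s|^2\lesssim\delta_N\|\nabla\psi_s\|_{L^2}^2$ (using $|f(x)^2-x|\lesssim\delta_N$) minus a ``truncation error'' $\sum_{\abs{k}>M_N}|\widehat{f(\tilde\mu^N_s)\nabla\psi_s}(k)|^2\lesssim M_N^{-2}\|f(\tilde\mu^N_s)\nabla\psi_s\|_{H^1}^2$, while $u^N(t-s,\tilde\mu^N_s)$ is uniformly bounded by $e^{\|\varphi\|_\infty}$ (since $\tilde\mu^N_s$ is a probability measure and $\|\varphi^N_t\|_\infty\leqslant\|\varphi\|_\infty$).

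The bound $|\nabla f(\tilde\mu^N)|^2\leqslant f'(\tilde\mu^N)^2|\nabla\tilde\mu^N|^2\lesssim|\nabla\tilde\mu^N|^2/\tilde\mu^N$ ties the $H^1$-norm of $f(\tilde\mu^N)$ to the Fisher information, so the entropy estimate from the previous paragraph gives $\E\int_0^t\|f(\tilde\mu^N_s)\nabla\psi_s\|_{H^1}^2\,ds\lesssim_\varphi \log N+t$ once the initial entropy of $\tilde\mu^N_0=\rho^N\ast\mu^N_0$ is controlled by Lemma~\ref{lem:initial-condition-entropy} (which produces the $\log N$ from the mollifier width being polynomial in $N$). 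With the parameter choice $M_N=\delta_N^{-1/2}$ and $\delta_N\simeq N^{-1/(d/2+1)}$, both error contributions scale like $\tfrac{\delta_N}{N}(t+\log N)=N^{-1-1/(d/2+1)}(t+\log N)$, and the initial-value discrepancy $|u^N(t,\tilde\mu^N_0)-u^N(t,\mu^N_0)|\lesssim|\langle\tilde\mu^N_0-\mu^N_0,\varphi^N_t\rangle|$ is of lower order by the choice of mollifier $\rho^N$. \textbf{The main obstacle} is the truncation-error step: one must extract just enough Sobolev regularity of $f(\tilde\mu^N)\nabla\psi_s$ from the entropy/Fisher bound to balance $M_N^{-2}$ with the dominant $\delta_N$, forcing the coupled parameter choice $M_N^2\delta_N=1$ and $\delta_N\simeq N^{-1/(d/2+1)}$ to be simultaneously compatible with the constraint $C_N(2M_N+1)^d/(N\delta_N)<1$ needed for the entropy estimate to close.
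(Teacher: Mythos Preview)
Your proposal is correct and follows essentially the same route as the paper (well-posedness via the results of \cref{sec:dk-well,sec:dk-com}, the entropy estimate by It\^o's formula on $\int\tilde\mu^N\log\tilde\mu^N$, and the weak error via Laplace duality with the Cole--Hopf function $v_t=N\log(p_t\ast e^{\varphi/N})$ solving $\partial_t v=\tfrac12\Delta v+\tfrac{1}{2N}|\nabla v|^2$, splitting the defect into a square-root error and a Fourier truncation error controlled by the Fisher information). Two small points: the transformation used for well-posedness in \cref{sec:dk-well} is not ``exponential-type'' but rather $v:=(1-\Delta)^{-1/2}u$, which brings the equation into the locally monotone framework; and in the entropy computation the paper does not exploit your cross-term cancellation (which indeed holds termwise for the Fourier basis since $e_k\overline{\nabla e_k}=-2\pi i k$ is purely imaginary) but instead applies Young's inequality $(a+b)^2\leqslant(1+\kappa)a^2+(1+\tfrac{1}{2\kappa})b^2$, which is why the stated constant is $\lambda=\tfrac14(1-\cdots)$ rather than the $\tfrac12(1-\cdots)$ your argument would yield.
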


\begin{proof}
	All this is shown in Section~\ref{sec:error-est}. See \cref{prop:app_mu_wellposed} for the well-posedness, see  \cref{prop:entropy-est} for the entropy estimate, and see \cref{thm:error-est} for the weak error estimate.
\end{proof}

\noindent In summary, the nonlinear approximation $\tilde{\mu}^{N}$ achieves a weak rate $\alpha_{d}>1$, which is always better than the error of the deterministic approximation $\rho$. In $d=1$ the nonlinear SPDE has a smaller weak error than the Gaussian approximation $\rho + \frac{1}{\sqrt N} \hat \rho$ (rate $N^{-5/3}\log N$ compared to $N^{-3/2}$), and in $d=2$ the rates are nearly the same ($N^{-3/2} \log N$ respectively $N^{-3/2}$). In higher dimensions we get a worse error bound for $\tilde \mu^N$ than for the Gaussian approximation, which achieves $N^{-3/2}$ independently of the dimension. But on the positive side $\tilde \mu^N$ is a probability density, while $\rho + \frac{1}{\sqrt N} \hat \rho$ is not positive and only a Schwartz distribution and not even a signed measure. The main obstruction towards reaching a better rate for $\tilde \mu^N$ is that the solvability conditions of Section~\ref{sec:dk-well} impose constraints on $\delta_N$ and $M_N$.

\end{section}

\begin{section}{Well-posedness and comparison for regularized DK-type SPDEs}
\begin{subsection}{Well-posedness}\label{sec:dk-well}
In this section, we prove strong well-posedness for SPDEs of the type
\begin{equation}\label{eq:rSPDE}
\begin{aligned}
du_{t} &=\frac{1}{2}\Delta u_{t}dt+\nabla\cdot (b(u_{t})dW_{t}),\\
u_{0} &\in L^{2}.
\end{aligned}
\end{equation} 
Similar stochastic conservation laws with Stratonovich noise have been intensely studied in the past years, see for example \cite{dareiotis2020nonlinear, fehrman2019well, fehrman2021well}, among many others. However, due to the gradient noise the solution theory for It\^o noise is more subtle and we need to make an additional smallness assumption to even get the existence of solutions; see the discussion before \cref{ass:const} below. Also, the coefficients of \eqref{eq:rSPDE} are not locally monotone in the sense of \cite[Theorem 5.1.3]{Liu2015}, and therefore the solution theory does not follow from standard theory.

\noindent There is the related work \cite{Bechtold} by Bechtold on equations resembling \eqref{eq:rSPDE}, and we are in the case of ``critical unboundedness'' from \cite[Section 4]{Bechtold}.  However, \cite{Bechtold} only proves the existence of a probabilistically weak solution with paths in $C([0,T],H^{-\epsilon})\cap L^{2}([0,T],H^{1})$ for any $\epsilon>0$. By a pathwise uniqueness argument using the a priori energy bound that we derive below, it should be possible to show the strong existence and uniqueness of a solution with paths in  $C([0,T],H^{-\epsilon})\cap L^{2}([0,T],H^{1})$ for any $\epsilon>0$. 
Instead, we  directly show the stronger statement of strong existence and uniqueness of a solution with paths in $C([0,T],L^{2})\cap L^{2}([0,T],H^{1})$.

\noindent We make the following assumptions.
\begin{assumption}[Assumption on the noise]\label{ass:noise}
Let $(\phi_{k})_{k}\subset C^{1}(\mathbb{T}^{d},\mathbb{C})$ with $\overline{\phi_{k}}=\phi_{-k}$ 
and consider independent $d$-dimensional complex-valued Brownian motions $(B^{k})_{k\in\mathbb{Z}^{d}}$ with $\overline{B^{k}}=B^{-k}$. We assume that the noise $(W_t)_t$ is given by
\begin{align*}
W_{t}(x):=\sum_{k\in\mathbb{Z}^{d}}\phi_{k}(x)B^{k}_{t}
\end{align*} 
such that 
\begin{align}\label{const_bounds}
C^{W}_{1}:=\sum_{k\in\mathbb{Z}^{d}}\norm{\phi_{k}}_{\infty}^{2}<\infty\quad\text{ and }\quad C^{W}_{2}:=\sum_{k\in\mathbb{Z}^{d}}\norm{\nabla\phi_{k}}_{\infty}^{2}<\infty,
\end{align} with $\norm{\phi_{k}}_{\infty}^2:=\sup_{x\in\mathbb{T}^{d}}\abs{\phi_{k}(x)}^2$ 
and $\norm{\nabla\phi_{k}}_{\infty}^{2}:=\sup_{x\in\mathbb{T}^{d}}\sum_{i=1}^{d}\abs{\partial_{i}\phi_{k}(x)}^2$. For some results we only require $C^W_1<\infty$, and we call this the \emph{relaxed \cref{ass:noise}}.
\end{assumption}

\begin{remark}
An example for a noise expansion satisfying the assumption is the Fourier expansion with cut-off $M_{N}\in\N$ from \eqref{app_noise}. The summability assumptions are trivially satisfied due to the finite cut-off. 
Similar to \cite[Remark 2.3]{fehrman2021well}, we can also consider a noise \begin{align*}
W_t(x)=\sum_{k\in\mathbb{Z}^{d}}a_{k}\exp(2\pi i k\cdot x)B^{k}_t,
\end{align*} for a real sequence $(a_{k})$ with $\sum_{k\in\mathbb{Z}^{d}}\abs{k}^{2}a_{k}^{2}<\infty$, which also satisfies our assumption.
\end{remark}

\begin{assumption}[Assumptions on the diffusion coefficient]\label{ass:diff}
We assume that the diffusion coefficient  $b\in C^{1}(\R,\R)$ has a bounded derivative with bound $L>0$,
\begin{align}\label{eq:Lip}
\norm{b'}_{\infty}\leqslant L.
\end{align} 
In particular, $b$ is of linear growth: There exist $C^b_{1}, C^b_2$ such that 
\begin{align}\label{eq:lin-g}
\abs{b(x)}^{2}\leqslant  C^b_1 \abs{x}^{2} + C^b_2, \qquad  x\in\R.
\end{align} 
\end{assumption}
\noindent From the assumption \ref{ass:diff}, it follows that $b$ is Lipschitz continuous with bound $L$, that is $|b(x) - b(y)| \leq L |x-y|$ for all $x,y \in \R$. 

\noindent If we would consider Stratonovich noise, these two assumptions would be sufficient. For It\^o noise we need an additional smallness condition for the gradient noise, because otherwise it could break the parabolic nature of the equation; see \cite[Section~2.4.2]{Pardoux2021}.

\begin{assumption}[Stochastic parabolicity]\label{ass:const}
We assume that the parameters from the previous assumptions satisfy $C_{1}^W\max(L^2,C_1^b)<1$.
\end{assumption}

\noindent The well-posedness theory of this section will apply to the approximating Dean-Kawasaki equation \eqref{def:app_mu}, where $b=\frac{1}{\sqrt{N}}f$ with $f$ from \eqref{def:f} and $L=C^b_1=1/\sqrt{N\delta}$. Furthermore,  the noise expansion is given with respect to the Fourier basis $(\phi_{k})_{k}$ with cut-off $M_{N}\in\N$. In that case, we have that $C_{1}^W\leqslant (2M_{N})^{d}$ and $C_{2}^W\leqslant(2M_{N})^{d}(2\pi M_{N})^{2}$. Hence, \eqref{const_bounds} is fulfilled. 

\noindent Note that due to the gradient noise term the local monotonicity condition is violated for  SPDEs of the form \eqref{eq:rSPDE} and the variational approach of \cite{Liu2015} cannot be applied directly. Instead, we transform the equation (by applying $(\Delta-1)^{1/2}$) into an equation for which the variational theory can be applied and we deduce well-posedness of the original equation by using a priori energy bounds.

\noindent The setting for the variational theory is defined as follows. Let $V=H^{1}(\mathbb{T}^{d})$ with $V^{\ast}=H^{-1}(\mathbb{T}^{d})$ and $H=L^{2}(\mathbb{T}^{d})$, such that we have the Gelfand triple $V\subset H\subset V^{\ast}$. We consider the Laplacian with periodic boundary conditions, that is, $\Delta: V=H^{1}(\mathbb{T}^{d})\to (H^{1}(\mathbb{T}^{d}))^{\ast}=V^{\ast}$ with $\Delta u (v):=\langle \Delta u, v\rangle_{V^{\ast},V}$, $u\in V^{\ast}, v\in V$. For the duality pairing, we have that $\langle \Delta u, v\rangle_{V^{\ast},V}=-\langle \nabla u, \nabla v\rangle_{H}=-\int\nabla u\cdot\nabla v$ when $u,v\in V$. Then, we define a solution to the equation (\ref{eq:rSPDE}) as follows.

\begin{definition}\label{def:app-sol}
A stochastic process $(u_{t})_{t\geqslant 0}$ with paths in $L^{2}([0,T],H^{1}(\mathbb{T}^d))\cap C([0,T],L^{2}(\mathbb{T}^d))$ is a (probabilistically strong and analytically weak) solution to the equation \eqref{eq:rSPDE} for the initial condition $u_{0}\in L^{2}(\mathbb{T}^d)$, if for all 
$\varphi\in H^{1}(\mathbb{T}^d)$,
\begin{align}\label{eq:app-sol}
\langle u_{t},\varphi\rangle&=\langle u_{0},\varphi\rangle-\int_{0}^{t}\frac{1}{2}\langle\nabla u_{s},\nabla\varphi\rangle ds+\sum_{k\in\mathbb{Z}^{d}}\int_{0}^{t}\langle\nabla(b(u_{t})\phi_{k}),\varphi\rangle\cdot dB^{k}_{s}
\\&=\langle u_{0},\varphi\rangle-\sum_{i=1}^{d}\int_{0}^{t}\frac{1}{2}\langle\partial_{i} u_{s},\partial_{i}\varphi\rangle ds+\sum_{i=1}^{d}\sum_{k\in\mathbb{Z}^{d}}\int_{0}^{t}\langle\partial_{i}(b(u_{t})\phi_{k}),\varphi\rangle dB^{k,i}_{s}.\nonumber
\end{align} 
\end{definition}

\noindent First, we study well-posedness of a transformed equation with the variational approach:

\begin{lemma}\label{axillary}
Let Assumptions  \ref{ass:diff} and \ref{ass:const} hold, as well as the relaxed Assumption~\ref{ass:noise}, and consider an initial condition $v_{0}\in L^{2}(\mathbb{T}^d)$. Then, there exists a unique probabilistically strong solution $v\in L^{2}([0,T],H^{1}(\mathbb{T}^d))\cap C([0,T],L^{2}(\mathbb{T}^d))$ to
\begin{align}\label{eq:v-VT}
d\langle v_{t},\varphi\rangle=-\frac{1}{2}\langle\nabla v_{t},\nabla\varphi\rangle dt+\sum_{k\in\mathbb{Z}^{d}}\langle G_{k}(v_{t}),\varphi\rangle \cdot dB^{k}_{t}
\end{align} 
for all $\varphi\in H^{1}$, where $G_{k}(v):=(1-\Delta)^{-1/2}\nabla (b((1-\Delta)^{1/2}v)\phi_{k})$.
\end{lemma}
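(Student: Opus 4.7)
My approach is to cast \eqref{eq:v-VT} into the variational framework of Liu--Röckner \cite{Liu2015} with the Gelfand triple $V = H^1(\mathbb{T}^d) \subset H = L^2(\mathbb{T}^d) \subset V^* = H^{-1}(\mathbb{T}^d)$, with drift $A(v) = \frac{1}{2}\Delta v \in V^*$ and diffusion $B(v) = (G_k(v))_{k\in\mathbb{Z}^d}$ valued in the Hilbert--Schmidt operators from the noise space into $H$. The transformation by $(1-\Delta)^{-1/2}$ was precisely designed so that the noise coefficient maps $V$ into $H$ (and not just into $V^*$, as would be the case for the untransformed gradient noise in \eqref{eq:rSPDE}). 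The key observation is that the Fourier multiplier $(1-\Delta)^{-1/2}\nabla$ has symbol $ik/\sqrt{1+\abs{k}^2}$, and since $\abs{k}^2/(1+\abs{k}^2)\leqslant 1$, it is a contraction on $L^2$, that is $\|(1-\Delta)^{-1/2}\nabla g\|_{L^2}\leqslant \|g\|_{L^2}$ for every $g\in L^2$.

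\textbf{Verification of hypotheses.} Writing $w = (1-\Delta)^{1/2}v\in L^2$ with $\|w\|_{L^2} = \|v\|_{H^1}$, the $L^2$-contractivity just recalled together with $\|\phi_k\|_\infty \in L^\infty$ and the linear growth and Lipschitz property of $b$ yield
\begin{equation*}
\sum_{k\in\mathbb{Z}^d}\|G_k(v)\|_{L^2}^2 \leqslant C_1^W\bigl(C_1^b\|v\|_{H^1}^2 + C_2^b|\mathbb{T}^d|\bigr), \qquad
\sum_{k\in\mathbb{Z}^d}\|G_k(v_1)-G_k(v_2)\|_{L^2}^2 \leqslant C_1^W L^2\|v_1-v_2\|_{H^1}^2,
\end{equation*}
which will serve as the growth and Lipschitz bounds for $B$. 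For the drift, integration by parts gives $2\langle A(v_1)-A(v_2),v_1-v_2\rangle_{V^*,V} = -\|\nabla(v_1-v_2)\|_{L^2}^2$, together with the linear growth bound $\|A(v)\|_{V^*}\leqslant \frac12\|v\|_{H^1}$. Hemicontinuity of $A$ is immediate from linearity, and for $B$ it follows from the Lipschitz bound together with continuity of $b$ and dominated convergence.

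\textbf{Monotonicity and coercivity.} Combining the two estimates above,
\begin{equation*}
2\langle A(v_1)-A(v_2),v_1-v_2\rangle + \sum_k \|G_k(v_1)-G_k(v_2)\|_{L^2}^2 \leqslant (C_1^W L^2 - 1)\|\nabla(v_1-v_2)\|_{L^2}^2 + C_1^W L^2\|v_1-v_2\|_{L^2}^2,
\end{equation*}
and by \cref{ass:const} we have $C_1^W L^2 < 1$, so the gradient term can be discarded and we obtain (global) monotonicity in $H$. Analogously,
\begin{equation*}
2\langle A(v),v\rangle + \sum_k \|G_k(v)\|_{L^2}^2 \leqslant -(1 - C_1^W C_1^b)\|v\|_{H^1}^2 + \|v\|_{L^2}^2 + C_1^W C_2^b|\mathbb{T}^d|,
\end{equation*}
which is coercivity with constant $1 - C_1^W C_1^b > 0$ (again by \cref{ass:const}). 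Having verified hemicontinuity, monotonicity, coercivity, and linear growth, the main well-posedness theorem from Chapter~4 of \cite{Liu2015} produces a unique probabilistically strong variational solution $v\in L^2([0,T],H^1)\cap C([0,T],L^2)$ for any initial condition $v_0\in L^2$.

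\textbf{Main obstacle.} The delicate point is that the stochastic integrand contains a full derivative of $v$ (through $(1-\Delta)^{1/2}$), so the gradient term arising from $\sum_k \|G_k(v_1)-G_k(v_2)\|_{L^2}^2$ is of the same order as the parabolic dissipation $-\|\nabla(v_1-v_2)\|_{L^2}^2$ from the drift. It is only the stochastic parabolicity condition $C_1^W\max(L^2,C_1^b) < 1$ that makes the sum of these two terms still dissipative in $H^1$, which is exactly why this smallness assumption is imposed and why the analogous theory for Stratonovich noise does not need it.
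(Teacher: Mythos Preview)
Your proof is correct and follows essentially the same approach as the paper: both cast the transformed equation into the Liu--R\"ockner variational framework with the Gelfand triple $H^1 \subset L^2 \subset H^{-1}$, use the $L^2$-contractivity of $(1-\Delta)^{-1/2}\nabla$ via Plancherel, and verify coercivity and monotonicity by exploiting the stochastic parabolicity condition $C_1^W\max(L^2,C_1^b)<1$ to absorb the gradient contribution from the noise into the dissipation from the drift. The only cosmetic difference is that the paper checks (H1)--(H4) of \cite[Section~4]{Liu2015} in a slightly different order and omits any discussion of hemicontinuity for the diffusion (only that of the drift is required there).
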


\begin{proof}
We will  check the conditions of \cite[Section 4]{Liu2015} to apply the variational theory.
Let us define $G(v) w \assign \sum_{k\in\mathbb{Z}^{d}} (1 - \Delta)^{- 1 / 2} \nabla (b ((1 -\Delta)^{1 / 2} v) \phi_k) w_k$ for $w\in l^{2}(\mathbb{Z}^{d})$ and $v\in H^{1}$.
Using \eqref{eq:lin-g}, we obtain the  following coercivity bound (that is, (H3) from \cite[Section 4]{Liu2015}): 
\begin{align*}
  \left\langle  \Delta v, v \right\rangle_{V^{\ast},V} + \| G (v)
  \|^2_{L_2 (l^{2}(\mathbb{Z}^{d}), H)} & = - \| \nabla v
  \|^2_{L^2} +  \sum_{k} \| (1 - \Delta)^{- 1 / 2} \nabla
  (b ((1 - \Delta)^{1 / 2} v) \phi_k) \|_{L^2}^2\\
  & \leqslant -  \| \nabla  v \|^2_{L^2} + \sum_{k} \|
  b ((1 - \Delta)^{1 / 2} v) \phi_k \|_{L^2}^2\\
  & \leqslant -  \|  \nabla v \|^2_{L^2} + \sum_{k} \|
  \phi_k \|_{\infty}^2 \norm{ b ((1 - \Delta)^{1 / 2} v)}_{L^{2}}^{2}\\
  & \leqslant - \|  \nabla v \|^2_{L^2} +  C_{1}^W( C_1^b \norm{ (1 - \Delta)^{1 / 2}
  v}_{L^{2}}^{2} + C^b_2) \\
  & = (C_{1}^W C^b_1-1)  \|  \nabla v \|^2_{L^2} + C_1^W C_1^b\|v\|_{L^2}^2 + C_{1}^W C^b_2,
\end{align*}
where we used the Assumption \ref{const_bounds}. Here, the first inequality follows from the Plancherel theorem for the Fourier transform $\F_{\mathbb{T}^{d}}$ on the torus ($\F_{\mathbb{T}^{d}}f(k):=\int_{\mathbb{T}^{d}}e^{- 2\pi i k\cdot x}f(x)dx$), such that for $f\in L^{2}(\mathbb{T}^{d})$,
\begin{align*}
\norm{(1 - \Delta)^{- 1 / 2} \nabla f}_{L^{2}(\mathbb{T}^{d})}^2 = \sum_{k\in\mathbb{Z}^{d}}(1+\abs{2\pi k}^2)^{-1}\abs{2\pi k}^2\abs{\F_{\mathbb{T}^{d}}(f)(k)}^2\!\leqslant  \!\sum_{k\in\mathbb{Z}^{d}}\abs{\F_{\mathbb{T}^{d}}(f)(k)}^2=\norm{f}_{L^{2}(\mathbb{T}^{d})}^2,
\end{align*}
and similarly we get $\norm{ (1 - \Delta)^{1 / 2}v}_{L^{2}}^{2} = \norm{ \nabla v}_{L^{2}}^{2}+\norm{v}_{L^2}^2$. The coercivity then follows by the Assumption \ref{ass:const} on the parameters, since $C_1^W C_1^b < 1$. 
The weak monotonicity condition (that is, condition (H2) from \cite[Section 4]{Liu2015}) follows from the analogue estimate,  using the global Lipschitz bound $L$ due to \eqref{eq:Lip}:
\begin{align*}
  & \left\langle  \Delta (v^1 - v^2), v^1 - v^2 \right\rangle_{V^{\ast},V} +
   \| G (v^1) - G (v^2) \|^2_{L^2 (l^{2}(\mathbb{Z}^{d}), H)}\\
  & = -  \|  \nabla (v^1 - v^2) \|_{L^2}^2 +  \sum_{k}
  \| (1 - \Delta)^{- 1 / 2} \nabla ((b ((1 - \Delta)^{1 / 2} v^1_t) -
  b ((1 - \Delta)^{1 / 2} v^2_t)) \phi_k) \|_{L^2}^2\\
  & \leqslant -  \|  \nabla (v^1 - v^2) \|_{L^2}^2 + 
  \sum_{k} \| (b ((1 - \Delta)^{1 / 2} v^1_t) - b ((1 - \Delta)^{1 / 2} v^2_t))
  \phi_k \|_{L^2}^2\\
  & \leqslant -  \|  \nabla (v^1 - v^2) \|_{L^2}^2 + 
  \sum_{k} \| \phi_k \|_{\infty}^2 L^{2} \| (1 -
  \Delta)^{1 / 2} (v^1_t - v^2_t) \|_{L^2}^2\\
  & = - \|  \nabla (v^1 - v^2) \|_{L^2}^2 + 
  C_{1}^W L^{2} (\|
  v^1_t - v^2_t \|_{L^2}^2 + \|  \nabla (v^1_t - v^2_t) \|_{L^2}^2)
  \\&\leqslant  C_{1}^WL^{2} \|
  v^1_t - v^2_t \|_{L^2}^2,
\end{align*}
using $C_1^W L^2 < 1$ in the last step. Thus, we obtain the weak monotonicity.
The hemicontinuity  from \cite[Section 4, H1]{Liu2015} follows by linearity of the Laplacian. The boundedness condition  from \cite[Section 4, H4]{Liu2015} is trivially satisfied due to continuity, that is, $\norm{\Delta u}_{V^{\ast}}\leqslant\norm{v}_{V}$.
\end{proof}

\begin{remark}\label{equivalence}
If $u$ is a solution to \eqref{eq:app-sol}, then $v:=(1-\Delta)^{-1/2}u$ is a solution to \eqref{eq:v-VT}.  
As \eqref{eq:v-VT} has a unique strong solution, it follows that the solution to \eqref{eq:app-sol} is pathwise unique.

\noindent Moreover, if in \cref{def:app-sol} we formally integrate $\langle \nabla(b(u_t)\phi_k),\varphi\rangle = \langle b(u_t)\phi_k,\nabla\varphi\rangle$ by parts, the definition makes sense even for $u$ with paths in $L^{2}([0,T],L^2(\mathbb{T}^d))\cap C([0,T],H^{-1}(\mathbb{T}^d))$. With this definition the equation is  equivalent to Equation \eqref{eq:v-VT} for $v:=(1-\Delta)^{-1/2}u$. Therefore, under Assumptions  \ref{ass:diff} and \ref{ass:const} and the relaxed Assumption~\ref{ass:noise}, for any $u_0 \in H^{-1}(\mathbb T^d)$ there exists a pathwise unique strong solution $u$ with paths in $L^{2}([0,T],L^2(\mathbb{T}^d))\cap C([0,T],H^{-1}(\mathbb{T}^d))$ to \eqref{eq:app-sol}. We only need the full \cref{ass:noise} to obtain better regularity for $u$. 

\noindent Note that the relaxed \cref{ass:noise} corresponds to a subcriticality condition in the sense of regularity structures~\cite{Hairer2014}: Indeed, this is precisely what we need from the noise so that the solution to the linearized equation $dZ_t = \frac12 \Delta Z_t dt + \nabla \cdot dW_t$ is a function in the space variable and not a distribution. However, under such weak assumptions $Z_t$ will not be in $H^1$ and therefore also $u_t$ should not be in $H^1$. So, to obtain better regularity for $u_t$, we make stronger assumptions on the noise. Besides the subcriticality condition we also need the smallness condition \cref{ass:const} which is due to the It\^o noise. We expect that for Stratonovich noise the relaxed \cref{ass:noise} together with \cref{ass:diff} is sufficient and that with our transform we can solve the Stratonovich version of \eqref{eq:app-sol} in the entire subcritical regime.
\end{remark}

\noindent To prove existence of a solution to \eqref{eq:app-sol} in the sense of \cref{def:app-sol}, we proceed as follows: 
Given the solution $v$ of \eqref{eq:v-VT}, we can define $u:=(1-\Delta)^{1/2}v$. Then 
$u\in C([0,T],H^{-1}(\mathbb{T}^d))\cap L^{2}([0,T],L^{2}(\mathbb{T}^{d}))$ almost surely and, by the equation for $v$, $u$ is a solution to the (``very weak'') equation
\begin{align}\label{eq:H(-1)-form}
\langle u_{t},\varphi\rangle=\langle u_{0},\varphi\rangle+\int_{0}^{t}\frac{1}{2}\langle u_{s},\Delta\varphi\rangle ds-\sum_{k}\int_{0}^{t}\langle b(u_{t})\phi_{k}, \nabla\varphi\rangle\cdot dB^{k}_{s},
\end{align} 
for all $\varphi\in C^{\infty}(\mathbb{T}^{d})$. 
The lemma below proves an energy estimate which yields tightness of the sequence of Galerkin projected solutions $(u^{R})_R$. That is, for the orthogonal projection $\Pi_{R}:H^{1}\to V_{R}$ with $V_{R}:=\text{span}(e_{r}\mid r\in\mathbb{Z}^{d},\abs{r}\leqslant R)$ for the Fourier basis 
$(e_{r})_{r\in\mathbb{Z}^{d}}$, we let
\begin{align}\label{eq:Galerkin}
u^{R}:=(1-\Delta)^{1/2}\Pi_{R}v^{R}=\Pi_{R}(1-\Delta)^{1/2}v^{R}.
\end{align} 
Here, $v^{R}$ solves \eqref{eq:v-VT} with $G_{k}$ replaced by $G_{k}^{R}$ with 
\begin{align*}
G_{k}^{R}(v):=\Pi_{R}(1-\Delta)^{-1/2}\nabla(b(\Pi_{R}(1-\Delta)^{1/2}v)\phi_{k}),\quad v\in H^{1}.
\end{align*}
By the uniqueness of the solution $v$ and the construction of the solution in the variational theory (cf. \cite[Theorem 5.1.3]{Liu2015}), it follows that $v^{R}\to v$ in $L^{2}(\Omega\times[0,T],H^{1})$.

\begin{lemma}\label{lem:energy-est}
Let Assumptions \ref{ass:noise}, \ref{ass:diff} and \ref{ass:const} hold. Let  $u_{0}\in L^{2}(\mathbb{T}^d)$  and let $v_{0}:=(1-\Delta)^{-1/2}u_{0}$. Let $v^{R}$ be the solution of the equation \eqref{eq:v-VT} with $G_{k}$ replaced by $G_{k}^{R}$, let $v^{R}_{0}=\Pi_{R}v_{0}$, and let $u^{R}$ be defined as in \eqref{eq:Galerkin}.  
Then, the following energy bound holds true:
\begin{align}\label{eq:b1}
\E[\norm{u^{R}_{t}}_{L^{2}}^{2}]+\lambda\int_{0}^{t}\E[\norm{u^{R}_{s}}_{\dot{H}^{1}}^{2}]ds\leqslant 2C_{2}C^{2}t\norm{u_{0}}_{L^{2}}^{2}\exp(t\tilde{\lambda}),
\end{align} where 
$\lambda:=1-C_{1}^WL^{2}>0$ and $\tilde{\lambda}:=C_{2}^W C^{b}_1$.
\end{lemma}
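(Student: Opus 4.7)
The plan is to apply It\^o's formula to $\|u^{R}_{t}\|_{L^{2}}^{2}$, using that $u^{R}=\Pi_{R}(1-\Delta)^{1/2}v^{R}$ lives in the finite-dimensional space $V_{R}$, so classical It\^o calculus applies without technical concerns. Since $\Pi_{R}$ commutes with $(1-\Delta)^{1/2}$ and with $\Delta$, the equation satisfied by $v^{R}$ translates into
\begin{equation*}
du^{R}_{t}=\tfrac{1}{2}\Delta u^{R}_{t}\,dt+\sum_{k\in\mathbb{Z}^{d}}\Pi_{R}\nabla\bigl(b(u^{R}_{t})\phi_{k}\bigr)\cdot dB^{k}_{t}.
\end{equation*}
It\^o's formula then yields, after integration by parts in the drift and use of $\Pi_{R}u^{R}=u^{R}$,
\begin{equation*}
d\|u^{R}_{t}\|_{L^{2}}^{2}=-\|\nabla u^{R}_{t}\|_{L^{2}}^{2}\,dt+dM_{t}+\sum_{k}\bigl\|\Pi_{R}\nabla(b(u^{R}_{t})\phi_{k})\bigr\|_{L^{2}}^{2}\,dt,
\end{equation*}
where $M$ is a local martingale (turned into a true martingale by localization, then removed by taking expectations).

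Next I would bound the It\^o correction term. Using the product rule $\nabla(b(u)\phi_{k})=b'(u)\nabla u\,\phi_{k}+b(u)\nabla\phi_{k}$ and Young's inequality $|a+c|^{2}\leqslant(1+\epsilon)|a|^{2}+(1+\epsilon^{-1})|c|^{2}$, together with $\|\Pi_{R}\|_{L^{2}\to L^{2}}\leqslant1$, the Lipschitz bound \eqref{eq:Lip} and the linear growth \eqref{eq:lin-g}, I get
\begin{equation*}
\sum_{k}\bigl\|\Pi_{R}\nabla(b(u)\phi_{k})\bigr\|_{L^{2}}^{2}\leqslant(1+\epsilon)L^{2}C_{1}^{W}\|\nabla u\|_{L^{2}}^{2}+(1+\epsilon^{-1})C_{2}^{W}\bigl(C_{1}^{b}\|u\|_{L^{2}}^{2}+C_{2}^{b}|\mathbb{T}^{d}|\bigr).
\end{equation*}
Choosing $\epsilon>0$ small enough so that $(1+\epsilon)L^{2}C_{1}^{W}<1$, which is possible precisely by the stochastic parabolicity \cref{ass:const}, the gradient contribution can be absorbed into the drift term $-\|\nabla u^{R}\|_{L^{2}}^{2}$ leaving a coefficient $\lambda=1-C_{1}^{W}L^{2}>0$ (up to an arbitrarily small loss absorbed into constants).

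Taking expectations, the martingale disappears and one arrives at an integral inequality of the form
\begin{equation*}
\E\bigl[\|u^{R}_{t}\|_{L^{2}}^{2}\bigr]+\lambda\int_{0}^{t}\E\bigl[\|\nabla u^{R}_{s}\|_{L^{2}}^{2}\bigr]\,ds\leqslant\|u_{0}\|_{L^{2}}^{2}+K_{1}t+\tilde{\lambda}\int_{0}^{t}\E\bigl[\|u^{R}_{s}\|_{L^{2}}^{2}\bigr]\,ds,
\end{equation*}
with $\tilde{\lambda}=C_{2}^{W}C_{1}^{b}$ (modulo the $\epsilon$-loss) and $K_{1}\propto C_{2}^{W}C_{2}^{b}|\mathbb{T}^{d}|$. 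A standard Gronwall argument applied to the scalar quantity $\E[\|u^{R}_{t}\|_{L^{2}}^{2}]$ yields the claimed exponential bound, after which reinserting back into the same inequality gives the dissipative term on the left. Note also that $\|u^{R}_{0}\|_{L^{2}}\leqslant\|u_{0}\|_{L^{2}}$ by the contraction property of the projection, which legitimizes replacing $u^{R}_{0}$ by $u_{0}$ on the right.

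The main technical point, and the only place where the structural assumptions really matter, is the absorption step: one must ensure that the gradient piece generated by the It\^o correction (which behaves like $L^{2}C_{1}^{W}\|\nabla u\|_{L^{2}}^{2}$) comes with a coefficient strictly smaller than the $\|\nabla u\|_{L^{2}}^{2}$ produced by the Laplacian. This is exactly the role of \cref{ass:const} and is the reason why naive splittings that generate a factor of $2$ must be avoided by using Young's inequality with a free parameter $\epsilon\downarrow0$; everything else is a routine It\^o-plus-Gronwall computation.
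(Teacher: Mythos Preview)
Your proposal is correct and follows the same It\^o-plus-Gronwall strategy as the paper: derive the finite-dimensional equation for $u^{R}$, apply It\^o's formula to $\|u^{R}_{t}\|_{L^{2}}^{2}$, expand the correction term via the product rule, absorb the gradient piece using \cref{ass:const}, and close with Gronwall. The one difference is that you use Young's inequality with a free parameter $\epsilon$ to split $|\nabla(b(u)\phi_{k})|^{2}$, whereas the paper's proof writes the two pieces $\|b'(u)\phi_{k}\nabla u\|_{L^{2}}^{2}$ and $\|b(u)\nabla\phi_{k}\|_{L^{2}}^{2}$ separately without tracking the cross term; your version is the cleaner way to get the sharp coefficient $\lambda=1-C_{1}^{W}L^{2}$, and it is exactly how the paper itself handles the analogous step later in the comparison principle and the entropy estimate.
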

\begin{proof}
Let $u^{R}=(1-\Delta)^{1/2}\Pi_{R}v^{R}=\Pi_{R}(1-\Delta)^{1/2}v^{R}$ for the projection $\Pi_{R}$ defined as above. As $e_{r}\in C^{\infty}(\mathbb{T}^{d})$, it follows that $\Pi_{R}v^{R}\in C([0,T],C^{\infty})$ almost surely and thus in particular $u^{R}\in C([0,T],C^{\infty})$ a.s. Furthermore, by the equation that $v^{R}$ solves, $u^{R}$ solves 
\begin{align}
\langle u_{t}^{R},\varphi\rangle=\langle u_{0}^{R},\varphi\rangle+\int_{0}^{t}\frac{1}{2}\langle \Delta u_{s}^{R},\varphi\rangle ds+\sum_{k}\int_{0}^{t}\langle \Pi_{R}\nabla (b(u_{t}^{R})\phi_{k}),\varphi\rangle\cdot dB^{k}_{s},
\end{align} 
for all $\varphi\in C^{\infty}(\mathbb{T}^{d})$, and since $u^{R}\in C([0,T],C^{\infty})$ it follows that
\begin{align*}
u^{R}_{t}(x)=u^{R}_{0}(x)+\int_{0}^{t}\frac{1}{2}\Delta u^{R}_{s}(x)ds+\sum_{k}\int_{0}^{t} \Pi_{R}\nabla (b( u^{R}_{s})\phi_{k}) (x) \cdot dB^{k}_{s}.
\end{align*}
By applying Itô's formula to $(u^{R}_{t}(x))^{2}$, we then obtain 
\begin{align*}
d\norm{u^{R}_{t}}_{L^{2}}^{2}=\int d(u^{R}_{t} (x))^{2} dx 
&=2\int u^{R}_{t}(x)\paren[\bigg]{\frac{1}{2}\Delta u^{R}_{t}(x)dt+\sum_{k} \Pi_{R}\nabla (b(u^{R}_{t}(x))\phi_{k} (x))\cdot dB^{k}_{t}}dx\\&
\quad +\sum_{k}\int \abs{ \Pi_{R}\nabla (b(u^{R}_{t}(x))\phi_{k}(x))}^{2}dxdt.
\end{align*}
Taking the expectation, the martingale vanishes and using $\norm{\Pi_{R}v}_{L^{2}}\leqslant\norm{v}_{L^{2}}$ and \eqref{eq:Lip}, we obtain
\begin{align}\label{eq:b-est}
\E[\norm{u^{R}_{t}}_{L^{2}}^{2}]&\leqslant\norm{u^{R}_{0}}_{L^{2}}^{2}-\int_{0}^{t}\E\bigg[\int_{\mathbb{T}^{d}}\nabla u^{R}_{s}(x)\cdot\nabla u^{R}_{s}(x)dx\bigg]ds+\int_{0}^{t}\sum_{k}\E[\norm{b'(u^{R}_{s})\phi_{k} \nabla u^{R}_{s}}_{L^{2}}^{2}]ds\nonumber\\
&\quad+\int_{0}^{t}\sum_{k}\E[\norm{b(u^{R}_{s}) \nabla\phi_{k}}_{L^{2}}^{2}]ds
\nonumber\\&\leqslant \norm{u^{R}_{0}}_{L^{2}}^{2}-\int_{0}^{t}\E[\norm{ \nabla u^{R}_{s}}_{L^{2}}^{2}]ds+C_{1}^W L^{2}\int_{0}^{t}\E[\norm{ \nabla u^{R}_{s}}_{L^{2}}^{2}]ds
+C_{2}^W \int_{0}^{t}\E[\norm{b(u^{R}_{s})}_{L^{2}}^{2}]ds.
\end{align}
Letting $\lambda:=1-C_{1}^W L^{2}>0$ and using the linear growth assumption on $b$ given by \eqref{eq:lin-g}, we obtain
\begin{align}\label{eq:u-b}
\E[\norm{u^{R}_{t}}_{L^{2}}^{2}]&\leqslant \norm{u^{R}_{0}}_{L^{2}}^{2}-\lambda\int_{0}^{t}\E[\norm{u^{R}_{s}}_{\dot{H}^{1}}^{2}]ds
+C_{2}^W C_{1}^b\int_{0}^{t}\E[\norm{u^{R}_{s}}_{L^{2}}^{2}]ds+C^W_{2}C^{b}_2 t.
\end{align}
Using Gronwall's inequality, we thus obtain
\begin{align}\label{eq:Gronwall-b}
\E[\norm{u^{R}_{t}}_{L^{2}}^{2}]\leqslant C_{2}^W C^{b}_1 \norm{u^{R}_{0}}_{L^{2}}^{2}\exp(t\tilde{\lambda})
\end{align} for $\tilde{\lambda}=C_{2}^W C^{b}_1$ and hence, plugging \eqref{eq:Gronwall-b} in \eqref{eq:u-b}, yields
\begin{align*}
\E[\norm{u^{R}_{t}}_{L^{2}}^{2}]+\lambda\int_{0}^{t}\E[\norm{u^{R}_{s}}_{H^{1}}^{2}]ds&\leqslant C_{2}^W C^{b}_2\norm{u^{R}_{0}}_{L^{2}}^{2}\exp(t\tilde{\lambda}),
\end{align*} which implies \eqref{eq:b1}, as 
$\norm{u_{0}^{R}}_{L^{2}}^{2}=\norm{\Pi_{R}u_{0}}_{L^{2}}^{2}\leqslant \norm{u_{0}}_{L^{2}}^2$.
\end{proof}

\begin{remark}[Energy estimate]\label{rem:energy-est}
If we take $b=\frac{1}{\sqrt{N}}f$, where  $f$ is given by \eqref{def:f}, we can improve the energy estimate by using that $\abs{f(x)}\leqslant\sqrt{\abs{x}}$ in order to estimate the $L^{2}$-norm of $b(u^{R}_{s})$ in \eqref{eq:b-est}. Utilizing also the mass conservation of the solution $u$, that we later prove in \cref{prop:mass conservation}, we then obtain the following a priori energy bound
\begin{align}\label{eq:energy-bound}
\E[\norm{u_{t}}_{L^{2}}^{2}]+\lambda\int_{0}^{t}\E[\norm{\nabla u_{s}}_{L^{2}}^{2}]ds&\leqslant \norm{u_{0}}_{L^{2}}^{2}+\frac{C_{2}^W}{N}\int_{0}^{t}\E[\norm{u_{s}}_{L^{1}}]ds\nonumber\\&\leqslant \norm{u_{0}}_{L^{2}}^{2}+\frac{C_{2}^W}{N}t\norm{u_{0}}_{L^{1}}.
\end{align}
\end{remark}

\begin{theorem}\label{thm:ex-sol}
Let Assumptions \ref{ass:noise}, \ref{ass:diff} and \ref{ass:const} hold and let $u_{0}\in L^{2}(\mathbb{T}^d)$. 
Then there exists a unique solution $u$ with paths in $L^{2}([0,T],H^{1}(\mathbb{T}^d))\cap C([0,T],L^{2}(\mathbb{T}^d))$ of the equation \eqref{eq:rSPDE} in the sense of \cref{def:app-sol}.
\end{theorem}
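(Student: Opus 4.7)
The plan is to handle uniqueness via the transform $v=(1-\Delta)^{-1/2}u$ and existence by passing to the limit in the Galerkin scheme $(u^R)$ introduced just before \cref{lem:energy-est}, with pathwise continuity in $L^2$ recovered at the end by a variational It\^o formula.

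For uniqueness, suppose $u$ is a solution in the sense of \cref{def:app-sol}. Setting $v:=(1-\Delta)^{-1/2}u$, I would test \eqref{eq:app-sol} against $(1-\Delta)^{-1/2}\varphi$ for arbitrary $\varphi\in H^1$ and use the self-adjointness of $(1-\Delta)^{-1/2}$ to rewrite the resulting identity as \eqref{eq:v-VT}. The uniqueness part of \cref{axillary} then forces $v$ to coincide with the variational solution constructed there, so $u=(1-\Delta)^{1/2}v$ is pathwise unique (as already flagged in \cref{equivalence}).

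For existence, I would identify the limit of the Galerkin sequence $(u^R)$ from \eqref{eq:Galerkin}. The variational construction produces $v$ as the strong limit $v^R\to v$ in $L^2(\Omega\times[0,T],H^1)$; since $(1-\Delta)^{1/2}$ is an isometry $H^1\to L^2$ and the $\Pi_R$ are $L^2$-contractions converging strongly to the identity, a simple splitting $u^R-u=\Pi_R(1-\Delta)^{1/2}(v^R-v)+(\Pi_R-I)u$ combined with dominated convergence yields $u^R\to u:=(1-\Delta)^{1/2}v$ in $L^2(\Omega\times[0,T],L^2)$. The energy bound \eqref{eq:b1} is uniform in $R$, so $\sup_R\int_0^T\E[\norm{u^R_s}_{H^1}^2]\,ds<\infty$, whence along a subsequence $u^R$ also converges weakly in $L^2(\Omega\times[0,T],H^1)$; weak lower semicontinuity then gives $u\in L^2(\Omega\times[0,T],H^1)$.

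Next I would pass to the limit in the weak Galerkin equation
\[
\langle u^R_t,\varphi\rangle = \langle u^R_0,\varphi\rangle - \tfrac12\int_0^t\langle\nabla u^R_s,\nabla\Pi_R\varphi\rangle\,ds - \sum_k\int_0^t\langle b(u^R_s)\phi_k,\nabla\Pi_R\varphi\rangle\cdot dB^k_s
\]
against smooth $\varphi$, using the strong convergence $u^R\to u$ in $L^2(\Omega\times[0,T],L^2)$ and the Lipschitz bound \eqref{eq:Lip} to handle $b(u^R)-b(u)$, together with the summability \eqref{const_bounds} to control the infinite sum over $k$ via an It\^o isometry; this shows that $u$ satisfies \eqref{eq:app-sol}. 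To upgrade to $u\in C([0,T],L^2)$ almost surely, I would observe that \eqref{eq:app-sol} fits into the Gelfand triple $H^1\subset L^2\subset H^{-1}$: with $u\in L^2([0,T],H^1)$ a.s., the drift $\tfrac12\Delta u_t$ lies in $L^2([0,T],H^{-1})$ and, using the chain rule for Lipschitz compositions in $H^1$ together with \eqref{const_bounds}, the noise coefficients satisfy $\sum_k\int_0^T\norm{\nabla(b(u_t)\phi_k)}_{L^2}^2\,dt<\infty$ almost surely. The standard variational It\^o formula (e.g.\ \cite[Theorem~4.2.5]{Liu2015}) then delivers pathwise continuity in $L^2$ along with the It\^o expansion of $\norm{u_t}_{L^2}^2$.

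The most delicate step is the passage to the limit in the stochastic term; strong $L^2$-convergence of $u^R$ and the Lipschitz bound on $b$ reduce it to a routine It\^o-isometry estimate, provided one carefully exchanges the summation over $k$ with the limit, which is controlled by the uniform summability \eqref{const_bounds}. A more conceptually subtle point is the final It\^o formula, since the equation for $u$ is not locally monotone (which is precisely why the transform to $v$ is required to obtain $v$ in the first place); however, the variational It\^o formula only requires the Gelfand-triple integrability of the drift and diffusion coefficients, not monotonicity, so it applies directly here.
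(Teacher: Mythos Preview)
Your proposal is correct and follows essentially the same strategy as the paper: uniqueness via the transform to $v$ and \cref{axillary}, existence by identifying $u=(1-\Delta)^{1/2}v$ as the weak $L^2(\Omega\times[0,T],H^1)$-limit of the Galerkin approximations $(u^R)$ from \cref{lem:energy-est}, and the upgrade to $C([0,T],L^2)$ via \cite[Theorem~4.2.5]{Liu2015}.

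The one genuine difference is in how you establish that $u$ solves \eqref{eq:app-sol}. You pass to the limit in the Galerkin equation, controlling the stochastic term by the Lipschitz bound on $b$, It\^o isometry, and the strong $L^2$-convergence $u^R\to u$. The paper instead reads the equation off directly from the equation for $v$: since $v$ solves \eqref{eq:v-VT} and $u=(1-\Delta)^{1/2}v$, testing against $(1-\Delta)^{-1/2}\psi$ with $\psi\in C^\infty$ immediately yields the ``very weak'' formulation \eqref{eq:H(-1)-form} for $u$; once $u\in L^2([0,T],H^1)$ a.s.\ is known, this is equivalent to \eqref{eq:app-sol}. The paper's route avoids the limit passage in the stochastic integral entirely (and with it the mild nuisance of convergence for all $t$ versus a.e.\ $t$), so it is slightly more economical. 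Your route is perfectly valid and has the minor advantage of being self-contained at the level of $u$, without appealing back to the equation for $v$.
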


\begin{proof}
From \cref{axillary} it follows that for $v_{0}:=(1-\Delta)^{-1/2}u_{0}\in H^{1}\subset L^{2}$, there exists a unique strong solution $v\in L^{2}([0,T],H^{1})\cap C([0,T],L^{2})$ of \eqref{eq:v-VT}. Let $u_{t}:=(1-\Delta)^{1/2}v_{t}$. By the regularity of $v$, we obtain that almost surely
 \begin{equation*}
u \in  L^{2}([0,T],L^{2})\cap C([0,T],H^{-1}).
 \end{equation*}
Furthermore, from the equation of $v$, testing against $\varphi\in C^{\infty}(\mathbb{T}^{d})$, we obtain that $u$ solves the ``very weak'' equation \eqref{eq:H(-1)-form}. By \cref{lem:energy-est}, the Galerkin projected solutions $(u^{R})_{R}$ satisfy the energy bound \eqref{eq:b1}. 

\noindent Since $L^{2}(\Omega\times [0,T],H^{1})$ is reflexive, we thus obtain that, along a subsequence, $(u^{R})_{R}$ converges weakly in $L^{2}(\Omega\times [0,T],H^{1})$. As $v^{R}\to v$ in $L^{2}(\Omega\times [0,T],H^{1})$ (see above), 
it follows that for $R\to\infty$,
\begin{align*}
u^{R}=\Pi_{R}(1-\Delta)^{1/2}v^{R}\to (1-\Delta)^{1/2}v=u
\end{align*} in $L^{2}(\Omega\times [0,T],L^{2})$. Thus, the limit of each such subsequence is given by $u$ and we can conclude that the whole sequence $(u^{R})_{R}$ converges to $u$, weakly in $L^{2}(\Omega\times[0,T],H^{1})$.  In particular, the limit $u$ satisfies
  \begin{equation*}
  u\in L^{2}([0,T],H^{1})
  \end{equation*} 
almost surely. 
Due to $u\in L^{2}([0,T],H^{1})\cap C([0,T],H^{-1})$ a.s., the mapping $t\mapsto u_{t}\in L^{2}$ is almost surely weakly continuous. 
Since $u\in L^{2}([0,T],H^{1})$ a.s. and $u_{0}\in L^{2}$, and because \eqref{eq:H(-1)-form} is equivalent to $u$ solving
\begin{align*}
du_{t}=-\frac{1}{2}\Delta u_{t}dt+\sum_{k} \nabla [b(u_{t})\phi_{k}]\cdot dB_{t}^{k}\in (H^{1})^{\ast}=H^{-1},
\end{align*}
we can apply \cite[Theorem 4.2.5]{Liu2015} to obtain an Itô formula for $d\norm{u_{t}}_{L^{2}}^{2}$. Almost sure continuity of the integrals in time then implies almost sure continuity of the mapping 
\begin{equation}\label{norm_cont}
t\mapsto\norm{u_{t}}_{L^{2}}^{2}.
\end{equation}
From continuity of \eqref{norm_cont} and continuity of $t\mapsto u_{t}\in L^{2}$ in the weak topology we get that $u\in C([0,T],L^{2})$ almost surely. 
Hence, overall, we indeed have that $u\in L^{2}([0,T], H^{1})\cap C([0,T],L^{2})$ almost surely. By the regularity of $u$ and as $u$ solves \eqref{eq:H(-1)-form}, it follows that $u$ solves \eqref{eq:app-sol} (for all $\varphi\in C^{\infty}(\mathbb{T}^{d})$ and thus, by density for all $\varphi\in H^{1}(\mathbb{T}^{d})$). 
Uniqueness of the solution follows from \cref{equivalence}.
\end{proof}

\begin{remark}\label{rem:uR-conv}
Using the Itô formula for $\norm{u^{R}_{t}-u_{t}}_{L^{2}}^{2}$ (cf. \cite[Theorem 4.2.5]{Liu2015}) and as $u^{R}_{0}=\Pi_{R}u_{0}\to u_{0}$ in $L^{2}$, we see that $u^{R}\to u$ strongly in $L^{2}(\Omega\times [0,T], H^{1})$ and that the energy estimate holds true for the limit $u$.
\end{remark}

\begin{remark}\label{rem:cont}
The regularity, $u \in C([0,T],L^{2})$ will be used to prove the comparison principle, \cref{comparison}, below.
\end{remark}

\end{subsection}
\begin{subsection}{A comparison principle for regularized DK-type SPDEs}\label{sec:dk-com}
In this section we prove a comparison principle for the class of SPDEs \eqref{eq:rSPDE}, which will in particular imply positivity and mass conservation of the solution.

\begin{theorem}\label{comparison}
Let Assumptions \ref{ass:noise}, \ref{ass:diff} and \ref{ass:const} hold. Furthermore, let $u^{+}$ and $u^{-}$ be two solutions of \eqref{eq:app-sol} with initial conditions $u^{+}_{0},u^{-}_{0}\in L^{2}$, respectively, such that  $u^{+}_{0}(x)\geqslant u^{-}_{0}(x)$ for Lebesgue-almost all $x\in\mathbb{T}^{d}$. Then
\begin{align*}
\p \paren[\big]{u^{+}_{t}\geqslant u^{-}_{t}\,\operatorname{Leb}\text{-a.e.}\,\forall t\in [0,T]}=1,
\end{align*}
where $\operatorname{Leb}$ is the Lebesgue measure on $\mathbb T^d$.
\end{theorem}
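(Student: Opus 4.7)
The plan is to combine mass conservation for the difference $w:=u^{+}-u^{-}$ with an $L^{1}$-contraction estimate, which together force the negative part $w^{-}$ to vanish. Since $u^{\pm}$ are driven by the same noise, subtracting the weak equations \eqref{eq:app-sol} yields
\[
dw_{t}=\tfrac12\Delta w_{t}\,dt+\sum_{k}\nabla(g_{t}\phi_{k})\cdot dB^{k}_{t},\qquad w_{0}=u^{+}_{0}-u^{-}_{0}\geqslant 0,
\]
with $g_{t}:=b(u^{+}_{t})-b(u^{-}_{t})$. By \cref{thm:ex-sol}, $w\in C([0,T],L^{2})\cap L^{2}([0,T],H^{1})$ a.s., and $|g_{t}|\leqslant L|w_{t}|$ by \eqref{eq:Lip}.

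Mass conservation is immediate by testing against $\varphi\equiv 1$: on the torus both $\int\Delta w\,dx=0$ and $\int\nabla(g\phi_{k})\,dx=0$, so $\int_{\mathbb{T}^{d}}w_{t}\,dx=\int_{\mathbb{T}^{d}}w_{0}\,dx$ a.s.\ for every $t$. For the $L^{1}$-contraction we choose a smooth convex approximation $\Psi_{\epsilon}\in C^{2}(\R)$ of $|x|$ with $\mathrm{supp}\,\Psi_{\epsilon}''\subset[-\epsilon,\epsilon]$, $\int\Psi_{\epsilon}''=2$, and $\|\Psi_{\epsilon}''\|_{\infty}\lesssim\epsilon^{-1}$ (e.g.\ $|\cdot|$ mollified at scale $\epsilon$), and apply the variational It\^o formula \cite[Theorem~4.2.5]{Liu2015} to $w\mapsto\int\Psi_{\epsilon}(w)\,dx$. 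Taking expectation and using integration by parts, the drift contributes the non-positive $-\tfrac12\E\!\int\!\Psi_{\epsilon}''(w_{s})|\nabla w_{s}|^{2}dx$, while the It\^o correction is $\tfrac12\sum_{k}\E\!\int\!\Psi_{\epsilon}''(w_{s})|\nabla(g_{s}\phi_{k})|^{2}dx$.

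To control the It\^o correction, we use Young's inequality together with \cref{ass:noise} to bound $\sum_{k}|\nabla(g\phi_{k})|^{2}\leqslant (1+\gamma)C_{1}^{W}|\nabla g|^{2}+(1+\gamma^{-1})C_{2}^{W}g^{2}$, and split $\nabla g=b'(u^{+})\nabla w+(b'(u^{+})-b'(u^{-}))\nabla u^{-}$, which gives $|\nabla g|^{2}\leqslant(1+\eta)L^{2}|\nabla w|^{2}+(1+\eta^{-1})|b'(u^{+})-b'(u^{-})|^{2}|\nabla u^{-}|^{2}$. Under \cref{ass:const} we pick $\gamma,\eta>0$ with $(1+\gamma)(1+\eta)C_{1}^{W}L^{2}<1$, so the $|\nabla w|^{2}$ piece of the correction is absorbed by the drift. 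The $g^{2}$-piece is $O(\epsilon)$ since $|g|<L\epsilon$ on $\mathrm{supp}\,\Psi_{\epsilon}''(w)$ and $\|\Psi_{\epsilon}''\|_{\infty}\lesssim\epsilon^{-1}$. For the residual $\int\Psi_{\epsilon}''(w)|b'(u^{+})-b'(u^{-})|^{2}|\nabla u^{-}|^{2}dx$, the key observation is the Stampacchia identity $\nabla g=0$ a.e.\ on $\{w=0\}$ (valid because $g\in H^{1}$ and $\{w=0\}\subset\{g=0\}$, equivalently $\nabla w=0$ a.e.\ on $\{w=0\}$ and $u^{+}=u^{-}$ there), which combined with the concentration of $\Psi_{\epsilon}''$ around $0$ yields the vanishing of this integral in the limit. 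Sending $\epsilon\to 0$ gives $\E\!\int\!|w_{t}|\,dx\leqslant\int|w_{0}|\,dx$. Combining with mass conservation and using $\int|w_{0}|=\int w_{0}$ (since $w_{0}\geqslant 0$),
\[
2\,\E\!\int\! w_{t}^{-}\,dx=\E\!\int\!|w_{t}|\,dx-\E\!\int\! w_{t}\,dx\leqslant\int|w_{0}|-\int w_{0}=0,
\]
forcing $w_{t}^{-}=0$ a.e.\ a.s.\ for each fixed $t$; continuity $t\mapsto w_{t}\in L^{2}$ (\cref{rem:cont}) extends this to a uniform-in-$t$ statement on a common full-measure event.

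The main obstacle is the rigorous vanishing of the residual $|\nabla g|^{2}$-contribution, because the naive bound $\Psi_{\epsilon}''(w)|b'(u^{+})-b'(u^{-})|^{2}\lesssim\epsilon^{-1}$ is not uniform in $\epsilon$, so a plain dominated convergence does not apply even though the pointwise limit is zero. The cleanest workaround is to first replace $b$ by a $C^{1,1}$-approximation $b_{n}$ with bounded second derivative, which provides the quantitative estimate $|b'_{n}(u^{+})-b'_{n}(u^{-})|\leqslant M_{n}|w|\leqslant M_{n}\epsilon$ on $\mathrm{supp}\,\Psi_{\epsilon}''(w)$ and hence an $O(\epsilon)$ bound; comparison for the regularized SPDE then follows from the argument above, and one passes to the limit $n\to\infty$ via the stability-in-$b$ of solutions, itself a consequence of the a priori energy bound of \cref{lem:energy-est}.
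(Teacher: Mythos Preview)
Your route is genuinely different from the paper's.  The paper does not go through an $L^{1}$--contraction plus mass conservation; instead it approximates $x\mapsto\max(x,0)^{2}$ by $C^{2}$ functions $\varphi_{p}$, applies It\^o's formula (from \cite{Pardoux1979}, not \cite{Liu2015}) to $\int\varphi_{p}(u^{-}_{t}-u^{+}_{t})\,dx$, and uses Gronwall in $\E[\|\max(w_{t},0)\|_{L^{2}}^{2}]$ directly.  In particular the paper never invokes mass conservation, never writes $2\E\!\int w_{t}^{-}=\E\!\int|w_{t}|-\E\!\int w_{t}$, and never introduces a $b_{n}$--regularisation.

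The substantive difference lies in how the gradient of $g=b(u^{+})-b(u^{-})$ is handled.  The paper simply bounds $|\nabla g|^{2}\leqslant L^{2}|\nabla w|^{2}$ and absorbs this into the good drift term via \cref{ass:const}; note however that for $b\in C^{1}$ this pointwise inequality is \emph{not} true in general (take $b=\sin$, $u^{-}=u^{+}+\pi$: then $\nabla w=0$ while $\nabla g=-2\cos(u^{+})\nabla u^{+}$), so the paper's short argument tacitly relies on extra structure.  You, by contrast, decompose $\nabla g=b'(u^{+})\nabla w+(b'(u^{+})-b'(u^{-}))\nabla u^{-}$ and confront the residual honestly; your $b_{n}\in C^{1,1}$ workaround is the standard and correct remedy, and indeed also patches the paper's proof if inserted there.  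What your approach buys is rigour on this point; what the paper's approach buys is brevity (no second limit in $n$, no stability argument) and a quantitative $L^{2}$ estimate rather than $L^{1}$.

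Two places where your sketch should be tightened: first, the It\^o formula you need is the one for $C^{2}$ functionals with bounded second derivative, e.g.\ \cite[Theorem~1.2]{Pardoux1979}, not \cite[Theorem~4.2.5]{Liu2015}, which only gives $d\|u_{t}\|_{L^{2}}^{2}$.  Second, the stability-in-$b$ you invoke at the end does not follow from the energy bound of \cref{lem:energy-est} alone (that lemma gives uniform bounds, not convergence, and a direct estimate on $\|u^{n}-u\|_{L^{2}}$ hits the very same $\nabla g$ issue).  The clean argument is to pass to the transformed equation of \cref{axillary}: there the noise operator $G_{k}$ is globally Lipschitz on $L^{2}$ with constant depending only on $C_{1}^{W}L^{2}<1$, so $v^{n}\to v$ in $L^{2}(\Omega\times[0,T],H^{1})$ by the standard variational theory, hence $u^{n}\to u$ in $L^{2}(\Omega\times[0,T],L^{2})$, and the a.e.\ ordering survives along a subsequence.
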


\begin{proof}
We follow the proof of \cite[Theorem 2.1]{Donati-Martin93}. 
The main idea is an application of Itô's formula to a suitable $C^{2}$ approximation of the map $x\mapsto \max(x,0)^2$, applied to the difference of the solutions. More precisely, let for $p>0$, 
$\varphi_{p}\in C^2(\R,\R)$ be defined by 
\begin{equation*}
\varphi_{p}(x):=\mathbf{1}_{[0,\infty)}(x)\int_{0}^{x}\int_{0}^{y}[2pz\mathbf{1}_{[0,\frac{1}{p}]}(z)+2\,\mathbf{1}_{(\frac{1}{p},\infty)}(z)]dzdy.
\end{equation*}
Note that $\varphi_{p}$ satisfies
\begin{align*}
0\leqslant \varphi'_{p}(x)\leqslant 2 \max(x,0)\quad\text{ and }\quad 0\leqslant \varphi''_{p}(x)\leqslant 2\, \mathbf{1}_{x\geqslant 0}.
\end{align*} 
Next, we define 
\begin{align*}
\Phi_{p}(h):=\int_{\mathbb{T}^{d}}\varphi_{p}(h(x))dx.
\end{align*}  
Let $w_t:=u^{-}_t-u^{+}_t$ for $t>0$. Since  $\varphi_{p}(x)\uparrow\max(x,0)^{2}$ for $p\to\infty$, by monotone convergence we conclude that $\Phi_{p}(w_{t})\uparrow\norm{\max(w_{t},0)}_{L^{2}}^{2}$ for $p\to\infty$.
Moreover, $\Phi_{p}$ is twice Fréchet differentiable and we obtain by the Itô formula from \cite[Theorem 1.2]{Pardoux1979} that
\begin{align}
d\Phi_{p}(w_{t}) 
&=-\frac{1}{2}\langle \varphi''_p(w_{t}),\abs{ \nabla w_{t}}^2\rangle dt
+\sum_{k}\big\langle\varphi'_p(w_{t}), \nabla\paren[\big]{(b(u^{-}_{t})-b(u^{+}_{t}))\phi_{k}}\big\rangle \cdot dB_{t}^{k} \label{comp_mart} \\
&\quad+\frac{1}{2}\sum_{k}\int \abs[\big]{\nabla\paren[\big]{(b(u^{-}_{t})-b(u^{+}_{t}))\phi_{k}}}^{2}\varphi''_p(w_{t})dxdt. \label{comp}
\end{align}
Notice that $\Phi_{p}(w_{0})=0$, since by assumption $w_0 \leq 0$ a.e., and that the martingale term in \eqref{comp_mart} is a indeed a martingale.
From Young's weighted inequality for products we obtain $(x+y)^2 = x^2 + 2xy + y^2 \leqslant (1+\kappa)x^2 + (1+1/(2\kappa)) y^2$ for any $\kappa>0$, and using Assumption \ref{ass:diff} on $b$ we can thus estimate the quadratic variation term \eqref{comp} by
\begin{align*}
&\frac12 \sum_k \left( \nabla\paren[\big]{(b(u^{-}_{t})-b(u^{+}_{t}))\phi_{k}}\right)^{2}\varphi''_p(w_{t}) \\
&\leqslant \frac12 \sum_k \left[(1+\kappa)\left( \nabla(b(u^{-}_{t})-b(u^{+}_{t}))\phi_{k}\right)^{2} + (1+\frac{1}{2\kappa}) \left((b(u^{-}_{t})-b(u^{+}_{t})) \nabla\phi_{k} \right)^2 \right] \varphi''_p(w_{t})\\
&\leqslant \frac12 \left[(1+\kappa) C^W_1 L^2 |\nabla w_t|^2 + (1+\frac{1}{2\kappa})C^W_2 L^2 2\cdot \abs{w_{t}}^{2} \right] \varphi''_p(w_{t}).
\end{align*} 
Note that $\abs{w_{t}}^{2}\varphi''_p(w_{t}) \leqslant 2\max{(w_t,0)}^2$, and that $C_{1}^W L^2 < 1$ by Assumption~\ref{ass:const} which means that there exists $\kappa>0$ with $(1+\kappa)C_1^W L^2 < 1$. For such $\kappa$ we obtain 
\begin{align*}
\E[\phi_{p}(w_{t})]&\leqslant \frac12 \paren[\Big]{(1+\kappa) C_{1}^W L^2 -1}\int_{0}^{t}\!\E \left[ \langle \varphi''_p(w_{s}),\abs{ \nabla w_{s}}^2\rangle \right] ds \\
& \quad + 2 (1+\frac{1}{2\kappa})C^W_2 L^2\int_{0}^{t}\!\E \left[\norm{\max(w_{s},0)}_{L^{2}}^2 \right]ds\displaybreak[3]\\
&\lesssim \int_{0}^{t}\E \left[\norm{\max(w_{s},0)}_{L^{2}}^2 \right]ds.
\end{align*}
Taking $p\to \infty$ and applying  Gronwall's inequality yields
\begin{align*}
\E \left[\norm{\max(w_{s},0)}_{L^{2}}^2 \right]=0.
\end{align*}
Hence, $(\p\otimes\operatorname{Leb})(w_{t}\leqslant 0)=1$ for all $t\geqslant 0$. By continuity, $w\in C([0,T],L^{2})$, the claim follows.
\end{proof}

\begin{corollary}[Non-negativity of the solution]\label{cor:positivity}
Let Assumptions \ref{ass:noise}, \ref{ass:diff} and \ref{ass:const} hold, and assume additionally that $b(0)=0$. Let $u$ be a solution of \eqref{eq:rSPDE} with initial condition $u_{0} \geqslant 0$ almost everywhere. 
Then $\p(u_{t}\geqslant 0\,\operatorname{Leb}\text{-a.e.}\,\forall t\in [0,T])=1$. 
\end{corollary}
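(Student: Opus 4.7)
The plan is to deduce this immediately from the comparison principle, \cref{comparison}, by taking the zero function as the lower solution. First I would observe that the constant process $u^{-} \equiv 0$ is itself a solution of \eqref{eq:rSPDE} (in the sense of \cref{def:app-sol}) with initial condition $u_{0}^{-} = 0$: indeed, because $b(0) = 0$, plugging $u \equiv 0$ into \eqref{eq:app-sol} makes every term vanish, and the regularity requirement $u \in L^{2}([0,T],H^{1}) \cap C([0,T],L^{2})$ is trivially satisfied. By the uniqueness part of \cref{thm:ex-sol}, this is \emph{the} solution starting from $0$.

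Next I would set $u^{+} := u$ to be the given solution with initial data $u_{0} \geqslant 0$ a.e. Since $u_{0}^{+}(x) \geqslant 0 = u_{0}^{-}(x)$ for Lebesgue-almost every $x \in \mathbb T^{d}$, the hypotheses of \cref{comparison} are met. Applying that theorem yields
\begin{equation*}
\p \paren[\big]{u_{t} = u_{t}^{+} \geqslant u_{t}^{-} = 0 \,\operatorname{Leb}\text{-a.e.}\,\forall t \in [0,T]} = 1,
\end{equation*}
which is exactly the claim.

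There is essentially no obstacle here: the only thing that needs checking is that $b(0) = 0$ makes the zero process a bona fide solution, which is immediate from the definition. All the serious work (the Itô-formula calculation on $\max(w_{t},0)^{2}$ and the use of Assumption~\ref{ass:const} via Young's inequality) has already been carried out in the proof of \cref{comparison}, so the corollary is a one-line consequence.
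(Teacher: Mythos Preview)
Your proof is correct and is essentially identical to the paper's own argument: observe that $b(0)=0$ makes the zero process a solution, then apply \cref{comparison} with $u^{-}\equiv 0$ and $u^{+}=u$.
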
 
\begin{proof}
Since $b(0)=0$, it follows that the zero function is a solution of \eqref{eq:rSPDE}. Then the claim directly follows from \cref{comparison}.
\end{proof}

\begin{proposition}[Conservation of mass]\label{prop:mass conservation}
Under the assumptions of the previous corollary, let $u$ solve \eqref{eq:rSPDE} with non-negative initial condition $u_{0}$. Then almost surely $\int\abs{u_{t}}(x)dx=\int \abs{u_{0}}(x)dx$ for all $t\in [0,T]$.
\end{proposition}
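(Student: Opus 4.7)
The plan is to insert the constant test function $\varphi\equiv 1\in H^{1}(\mathbb{T}^{d})$ into the weak formulation \eqref{eq:app-sol} from \cref{def:app-sol}. Since $\nabla\varphi=0$, the dissipation term $\tfrac12\langle\nabla u_{s},\nabla\varphi\rangle$ vanishes identically, and after integration by parts on the torus (which has no boundary) each stochastic integrand satisfies
\begin{equation*}
\langle\nabla(b(u_{s})\phi_{k}),1\rangle = -\langle b(u_{s})\phi_{k},\nabla 1\rangle_{L^{2}} = 0.
\end{equation*}
This integration by parts is legitimate because $u_{s}\in H^{1}$ by \cref{thm:ex-sol}, and since $b$ is Lipschitz with $b(0)=0$ and $\phi_{k}\in C^{1}$, the product $b(u_{s})\phi_{k}$ lies in $H^{1}$, so the pairing makes sense as an $L^{2}$ inner product. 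Consequently, for each fixed $t\in[0,T]$ almost surely
\begin{equation*}
\int_{\mathbb{T}^{d}} u_{t}(x)\,dx = \langle u_{t},1\rangle = \langle u_{0},1\rangle = \int_{\mathbb{T}^{d}} u_{0}(x)\,dx.
\end{equation*}

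Next, I would upgrade this pointwise-in-$t$ statement to a simultaneous one. By the path regularity $u\in C([0,T],L^{2})$ from \cref{thm:ex-sol}, the map $t\mapsto\langle u_{t},1\rangle$ is almost surely continuous. Applying the equation at all rational $t$ within a single $\p$-null set and invoking continuity then yields $\int u_{t}=\int u_{0}$ for all $t\in[0,T]$ with probability one.

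Finally, \cref{cor:positivity} applies under our hypotheses ($b(0)=0$ and $u_{0}\geqslant 0$ a.e.) and gives $u_{t}\geqslant 0$ Lebesgue-a.e. for all $t\in[0,T]$ almost surely. Hence $|u_{t}|=u_{t}$ and $|u_{0}|=u_{0}$ almost everywhere, and the identity above converts to $\int|u_{t}|\,dx=\int|u_{0}|\,dx$ for all $t\in[0,T]$, which is the claim. There is no real obstacle here: once positivity is in hand from \cref{cor:positivity}, mass conservation reduces to the observation that constants are admissible test functions on the compact manifold $\mathbb{T}^{d}$ and that the conservative form of the noise $\nabla\cdot(b(u)dW)$ kills the stochastic contribution under such a test.
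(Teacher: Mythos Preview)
Your argument is correct and follows exactly the paper's own proof: test against $\varphi\equiv 1$, use the divergence structure of the noise to kill the stochastic term, invoke positivity from \cref{cor:positivity} to pass to $|u_t|$, and use continuity of $t\mapsto\langle u_t,1\rangle$ to obtain the statement for all $t$. You are simply a bit more explicit than the paper in justifying the integration by parts and the upgrade from fixed $t$ to all $t$.
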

\begin{proof}
Using non-negativity of the solution $u$ obtained by \cref{cor:positivity} and  testing the equation against $\varphi = 1\in C^{\infty}(\mathbb{T}^{d})$, 
 we have that, for almost all $\omega\in\Omega$, 
\begin{align*}
\int\,\abs{u_{t}}(x)dx =\int u_{t}(x)dx = \langle u_{t},1\rangle=\langle u_{0},1\rangle =\int\,\abs{u_{0}}(x)dx .
\end{align*}
The claim follows from the continuity of $t \mapsto \int u_{t}(x)dx$.
\end{proof}

\end{subsection}
\end{section}
\begin{section}{Weak error estimate}\label{sec:error-est}
In this section we estimate the weak error between the  martingale solution $\mu^{N}$ of the Dean-Kawasaki equation and the strong solution $\tilde{\mu}^{N}$ of the approximate Dean-Kawasaki equation. For that purpose we first have to discuss the solution theory of the approximate Dean-Kawasaki equation:

\begin{proposition}\label{prop:app_mu_wellposed}
	Consider the equation
	\begin{equation}\label{def:app_mu_sec4}
	\begin{aligned}
		d\tilde{\mu}^{N}_{t} &=\frac{1}{2}\Delta\tilde{\mu}^{N}_{t}dt+\frac{1}{\sqrt{N}}\nabla\cdot (f(\tilde{\mu}^{N}_{t})dW^{N}_{t}), \\
		\tilde{\mu}_{0}^{N} & \in L^{2}(\mathbb{T}^d),
	\end{aligned}
	\end{equation}
	where $f \in C^1(\R)$ satisfies
	\begin{align}\label{eq:f-prime-bounds_sec4}
		\norm{f'}_{L^{\infty}}\lesssim\frac{1}{\sqrt{\delta}}, \quad 
		\abs{f'(x)}\lesssim\frac{1}{\sqrt{x}},\text{ for all } x>0
	\end{align}
	and
	\begin{align}\label{eq:f-bounds_sec4}
		\abs{f(x)}\lesssim\sqrt{\abs{x}}, \quad 
		\abs{f(x)^{2}-x}\lesssim\delta, \text{ for all } x\geqslant 0,
	\end{align} 
	and where
	\begin{equation}\label{app_noise-2}
		W^{N}_{t}(x):=\sum_{\abs{k}\leqslant M_{N}}e_{k}(x)B_{t}^{k}:=\sum_{\abs{k}\leqslant M_{N}}\exp(2\pi i k\cdot x)B_{t}^{k}
	\end{equation}
	for $d$-dimensional complex-valued Brownian motions $(B^{k})_{k\in\mathbb{Z}^{d}}$. If $\frac{C_N(2M_{N}+1)^{d}}{N\delta_N} < 1$ holds (``coercivity condition''), where $C_N>0$ is such that $\|f'\|_\infty^2 \leqslant \frac{C_N}{\delta_N}$, then Equation \eqref{def:app_mu_sec4} has a unique strong solution in the sense of Section~\ref{sec:dk-well}. If $\tilde \mu^N_0 \geqslant 0$, then $\tilde \mu^N_t \geqslant 0$ and $\norm{\tilde \mu^N_t}_{L^1}=\norm{\tilde \mu^N_0}_{L^1}$ for all $t \in [0,T]$.
\end{proposition}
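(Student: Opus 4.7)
The plan is to show that \cref{prop:app_mu_wellposed} is a direct application of the abstract well-posedness and comparison theory from \cref{sec:dk-well}. Specifically, I take $b := f/\sqrt{N}$ and expand the noise as $W^{N}_t(x) = \sum_{\abs{k}\leqslant M_N} e_k(x) B^k_t$, so that \eqref{def:app_mu_sec4} fits the template of \eqref{eq:rSPDE} with this particular choice of diffusion coefficient and noise. The proof then reduces to verifying that \cref{ass:noise}, \cref{ass:diff}, and \cref{ass:const} hold under the stated hypotheses, after which I invoke \cref{thm:ex-sol}, \cref{cor:positivity}, and \cref{prop:mass conservation} in turn.

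To verify \cref{ass:noise}, I set $\phi_k := e_k$ for $\abs{k}\leqslant M_N$ and $\phi_k := 0$ otherwise; then $\norm{\phi_k}_\infty = 1$ and $\norm{\nabla \phi_k}_\infty \leqslant 2\pi\abs{k}$, and both sums in \eqref{const_bounds} are finite by the cutoff (in fact $C_1^W$ equals the number of lattice points in $\{\abs{k}\leqslant M_N\}$, which is bounded by $(2M_N+1)^d$). For \cref{ass:diff}, the bound $\norm{f'}_\infty^2 \leqslant C_N/\delta_N$ from \eqref{eq:f-prime-bounds_sec4} gives $\norm{b'}_\infty \leqslant L$ with $L^2 = C_N/(N\delta_N)$. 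Because $\abs{f(x)}\lesssim \sqrt{\abs{x}}$ forces $f(0)=0$, the Lipschitz estimate gives $\abs{b(x)}\leqslant L\abs{x}$, so I may take $C^b_1 = L^2$ and $C^b_2 = 0$ in \eqref{eq:lin-g}. Finally, for \cref{ass:const}, the stochastic parabolicity condition $C_1^W \max(L^2, C^b_1) < 1$ reduces to $C_1^W L^2 \leqslant \frac{(2M_N+1)^d C_N}{N\delta_N} < 1$, which is precisely the coercivity hypothesis of the proposition.

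Having verified the three assumptions, \cref{thm:ex-sol} provides the unique strong solution $\tilde{\mu}^N \in C([0,T],L^2)\cap L^2([0,T],H^1)$ in the sense of \cref{def:app-sol}. Since $b(0) = f(0)/\sqrt{N} = 0$, I apply \cref{cor:positivity} to conclude that $\tilde{\mu}^N_t \geqslant 0$ Lebesgue-almost everywhere, almost surely for all $t \in [0,T]$, whenever $\tilde{\mu}^N_0 \geqslant 0$. Mass conservation $\norm{\tilde{\mu}^N_t}_{L^1} = \norm{\tilde{\mu}^N_0}_{L^1}$ then follows from \cref{prop:mass conservation}. I do not anticipate any substantive obstacle: the entire analytic content has already been absorbed by the abstract development in \cref{sec:dk-well}, so the only care required is a bookkeeping check that the constants $L$, $C^b_1$, $C^b_2$, $C^W_1$, $C^W_2$ arising from the concrete Dean-Kawasaki setup match, via the coercivity condition, the smallness requirement $C_1^W L^2 < 1$ of the abstract theory.
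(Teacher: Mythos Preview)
Your proposal is correct and follows essentially the same route as the paper's proof: verify \cref{ass:noise}, \cref{ass:diff}, and \cref{ass:const} for the concrete choice $b=f/\sqrt{N}$ and $\phi_k=e_k\mathbf{1}_{\abs{k}\leqslant M_N}$, noting $C_1^W=(2M_N+1)^d$, $L^2=C^b_1=C_N/(N\delta_N)$, $C^b_2=0$ (since $f(0)=0$), and then invoke the abstract results. If anything, you are slightly more explicit than the paper in naming \cref{cor:positivity} and \cref{prop:mass conservation} for the positivity and mass conservation parts.
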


\begin{proof}
	This follows from the results in Section~\ref{sec:dk-well} once we verify that our equation satisfies the assumptions of that section. For that purpose, note that
	\[
		C^W_1 = \sum_{\abs{k}\leqslant M_N} \norm{e_k}_\infty^2 = (2M_N+1)^d,\qquad C^W_2 = \sum_{\abs{k}\leqslant M_N} |2\pi k|^2 \norm{e_k}_\infty^2 <\infty,
	\]
	and, since $f(0)=0$, we also have $L^2 = C^b_1 \leqslant \frac{C_N}{N \delta_N}$ by assumption, and $C^b_2 = 0$. Therefore, Assumptions \ref{ass:noise} and \ref{ass:diff} hold. Moreover, the condition $\frac{C_N(2M_{N}+1)^{d}}{N\delta_N} < 1$ is  nothing else than $C^1_W \max\{L^2, C^b_1\} < 1$, i.e. also Assumption \ref{ass:const} holds.
\end{proof}

\noindent This well-posedness result together with the energy estimate of Section~\ref{sec:dk-well} would be sufficient to derive a weak error estimate for the approximation of $\mu^N$ with $\tilde \mu^N$. However, through the energy estimate the weak error would depend on $\norm{\tilde \mu^N_0}_{L^2}^2$. Therefore, inspired by \cite{fehrman2021well}, we derive an entropy estimate below. 

\noindent The main advantage is that this estimate only depends on $\int\tilde{\mu}_{0}^{N}\log(\tilde{\mu}_{0}^{N})$, which typically is much smaller than $\|\tilde\mu_0^N\|_{L^2}^2$ and this leads to better error estimates. To be precise, we will show that $\int\tilde{\mu}_{0}^{N}\log(\tilde{\mu}_{0}^{N})\lesssim\log(N)$, while typically $\norm{\tilde{\mu}_{0}^{N}}_{L^{2}}^{2}\lesssim N^{d}$, cf.~\cref{lem:initial-condition-entropy} below.
\begin{proposition}[Entropy estimate]\label{prop:entropy-est}
Let $\tilde{\mu}^{N}$ be a solution of the approximate Dean-Kawasaki equation \eqref{eq:app-sol} with the initial condition $\tilde{\mu}_{0}^{N}:=\rho^{N}\ast\mu_{0}^{N}$ for $\mu_{0}^{N}:=\frac{1}{N}\sum_{i=1}^{N}\delta_{x^{i}}$ and let $(\rho^{N})_{N\geqslant 1}$ be a mollifying sequence such that $\tilde{\mu}_{0}^{N}\geqslant 0$  
and $\norm{\rho^{N}\ast\mu_{0}^{N}}_{L^{1}}=1$. Furthermore, assume the  coercivity condition $\frac{C_N(2M_{N}+1)^{d}}{N\delta_N} < 1$, where $C_N>0$ is such that $\|f'\|_\infty^2 \leqslant \frac{C_N}{\delta_N}$. Then the following entropy estimate holds
\begin{align}\label{eq:entropy-est}
\MoveEqLeft
\sup_{t\in[0,T]}\E\bigg[\int\tilde{\mu}^{N}_{t}\log(\tilde{\mu}^{N}_{t})\bigg]+\lambda\int_{0}^{T}\E\bigg[\int\frac{\abs{ \nabla\tilde{\mu}^{N}_{t}}^{2}}{\tilde{\mu}^{N}_{t}}\bigg]dt\nonumber\\&\lesssim \int\tilde{\mu}^{N}_{0}\log(\tilde{\mu}^{N}_{0}) + \frac{T M_{N}^{d+2}}{N},
\end{align}
for $\lambda:=\frac14(1-\frac{C(2M_{N}+1)^{d}}{N\delta_N})$.
\end{proposition}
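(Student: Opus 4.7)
The plan is to apply It\^o's formula to the regularized entropy functional $\Phi_\epsilon(\tilde\mu^N_t)\assign\int_{\mathbb{T}^d}(\tilde\mu^N_t+\epsilon)\log(\tilde\mu^N_t+\epsilon)\,dx$ for a small parameter $\epsilon>0$, and then to send $\epsilon\downarrow 0$. The integrand $\varphi_\epsilon(y)=(y+\epsilon)\log(y+\epsilon)$, once smoothly extended to $\R$, is $C^2$ with $\varphi_\epsilon'(y)=\log(y+\epsilon)+1$ and $\varphi_\epsilon''(y)=(y+\epsilon)^{-1}$, uniformly bounded on $[0,\infty)$ by $\epsilon^{-1}$; this is enough regularity to invoke the Pardoux-style It\^o formula used already in the proof of \cref{comparison}. (Equivalently one can run the calculation at the smooth Galerkin level $u^R$ of Section~\ref{sec:dk-well} and pass to the limit via \cref{rem:uR-conv}.) Integrating by parts, the drift contribution is $\int\varphi_\epsilon'(\tilde\mu^N)\cdot\tfrac12\Delta\tilde\mu^N\,dx=-\tfrac12\int|\nabla\tilde\mu^N|^2/(\tilde\mu^N+\epsilon)\,dx$, which is the Fisher-information term we eventually want on the left; the stochastic integral vanishes in expectation.

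The core of the argument is to show that the It\^o correction
\begin{equation*}
\frac{1}{2N}\sum_{|k|\leqslant M_N}\int\frac{|\nabla(f(\tilde\mu^N)e_k)|^2}{\tilde\mu^N+\epsilon}\,dx
\end{equation*}
absorbs all but a fraction of the drift term, at the price of a harmless additive constant. Writing $\nabla(f(\tilde\mu^N)e_k)=f'(\tilde\mu^N)\nabla\tilde\mu^N\,e_k+f(\tilde\mu^N)\nabla e_k$ and splitting with Young's inequality $(a+b)^2\leqslant(1+\kappa)a^2+(1+\kappa^{-1})b^2$ decouples the ``bad'' gradient piece from the ``good'' zeroth-order piece. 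For the gradient piece, the uniform bound $\|f'\|_\infty^2\leqslant C_N/\delta_N$ together with $\sum_{|k|\leqslant M_N}\|e_k\|_\infty^2=(2M_N+1)^d$ gives the multiple $\tfrac12(1+\kappa)\eta$ of $\int|\nabla\tilde\mu^N|^2/(\tilde\mu^N+\epsilon)$, where $\eta\assign C_N(2M_N+1)^d/(N\delta_N)<1$. For the zeroth-order piece, the crucial sublinearity $|f(y)|^2\lesssim y$ from \eqref{eq:f-bounds_sec4} yields $|f(\tilde\mu^N)|^2/(\tilde\mu^N+\epsilon)\lesssim 1$ uniformly in $\epsilon$, and combined with $\sum_{|k|\leqslant M_N}\|\nabla e_k\|_\infty^2=\sum_{|k|\leqslant M_N}|2\pi k|^2\lesssim M_N^{d+2}$ one gets a bound $\lesssim(1+\kappa^{-1})M_N^{d+2}/N$.

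The optimal choice $\kappa=(1-\eta)/(2\eta)$ makes $(1+\kappa)\eta=(1+\eta)/2$, so drift plus first It\^o piece collapse to $-\lambda\int|\nabla\tilde\mu^N|^2/(\tilde\mu^N+\epsilon)$ with $\lambda=(1-\eta)/4$, exactly the constant in the statement. With this $\kappa$ one has $1+\kappa^{-1}=(1+\eta)/(1-\eta)$, uniformly bounded under the standing assumption $\eta<1$. Integrating on $[0,t]$ and taking expectations produces the pre-limit bound
\begin{equation*}
\E[\Phi_\epsilon(\tilde\mu^N_t)]+\lambda\int_0^t\E\!\left[\int\frac{|\nabla\tilde\mu^N_s|^2}{\tilde\mu^N_s+\epsilon}\,dx\right]ds\lesssim\Phi_\epsilon(\tilde\mu^N_0)+\frac{tM_N^{d+2}}{N}.
\end{equation*}
Sending $\epsilon\downarrow 0$ finishes the proof: dominated convergence controls the entropy term (using $|y\log y|\leqslant y^2+e^{-1}$ and the energy bound of \cref{lem:energy-est}), while $|\nabla\tilde\mu^N|^2/(\tilde\mu^N+\epsilon)\uparrow|\nabla\tilde\mu^N|^2/\tilde\mu^N$ monotonically, with the convention $0/0=0$ on $\{\tilde\mu^N=0\}$ justified by Stampacchia's theorem ($\nabla\tilde\mu^N=0$ a.e.\ on the zero set of the $H^1$ function). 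Taking the supremum over $t\in[0,T]$ then yields the claim.

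The main technical obstacle is the rigorous application of It\^o's formula to $\Phi_\epsilon$ evaluated at a solution with only $C_tL^2\cap L^2_tH^1$ regularity, and the fact that the Galerkin approximations $u^R$ are \emph{not} a priori non-negative (so $\varphi_\epsilon$ must really be smoothly extended to $\R$ before It\^o is applied at the Galerkin level). I would handle this by carrying out the entire computation for $u^R$, where the pathwise It\^o formula applies since $u^R$ is spatially $C^\infty$, invoking \cref{cor:positivity} only on the limit $u=\tilde\mu^N$, and passing to the limit $R\to\infty$ using the strong convergence in $L^2(\Omega\times[0,T],H^1)$ from \cref{rem:uR-conv} together with lower semicontinuity of the Fisher-information functional (which is why it must remain on the left-hand side of the inequality).
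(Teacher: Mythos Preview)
Your proposal is correct and follows essentially the same route as the paper: regularize with $(\tilde\mu^N+\epsilon)\log(\tilde\mu^N+\epsilon)$, apply the Pardoux It\^o formula, split the correction via Young's inequality with parameter $\kappa$, and choose $\kappa$ so that the gradient piece is absorbed with the exact constant $\lambda=\tfrac14(1-\eta)$. The only cosmetic differences are that the paper (i) explicitly checks the local martingale is a true martingale via the quadratic-variation bound $\sum_k\E\int_0^t|\langle\log(\gamma+\tilde\mu^N_s)+1,\nabla(f(\tilde\mu^N_s)e_k)\rangle|^2\,ds<\infty$ (which you assert but do not verify), and (ii) passes to the limit $\epsilon\downarrow0$ by Fatou on the left-hand side rather than your combination of dominated and monotone convergence; both limiting arguments work.
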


\begin{proof}
Let $\gamma > 0$ and $g_{\gamma}(y):=(\gamma+y)\log(\gamma+y), y\in [0,\infty)$. Then $g_\gamma\in C^{\infty}([0,\infty),\R)$, with $(g_{\gamma})'(y)=\log(\gamma+y)+1$ and $(g_{\gamma})''(y)=\frac{1}{\gamma+y}$, and we extend $g_\gamma$ to $\R$ so that the extension is in $C^2$. But recall that $\tilde \mu^N$ is positive by the comparison principle, and therefore we only evaluate $g_\gamma$ on $[0,\infty)$ below. By the Itô formula from \cite[Theorem 1.2]{Pardoux1979} we have
\begin{align*}
d\paren[\bigg]{\int g_\gamma(\tilde \mu^N_t)} & =-\frac12 \langle g_\gamma''(\tilde \mu^N_t), |\nabla \tilde \mu^N_t|^2 \rangle dt + \sum_{|k|\leqslant M_N}\big\langle g_\gamma'(\tilde \mu^N_t), \nabla\paren{f(\tilde \mu^N_t)e_k}\big\rangle \cdot dB_{t}^{k} \\
&\quad+\frac{1}{2N}\sum_{|k|\leqslant M_N} \langle g_\gamma''(\tilde \mu^N_t),\abs{\nabla\paren{f(\tilde \mu^N_t)e_k}}^{2}\rangle dt\\
& =  \paren[\bigg]{- \frac12\int\frac{|\nabla \tilde \mu^N_t|^2}{\gamma+\tilde \mu^N_t} + \frac{1}{2N} \sum_{|k|\leqslant M_N} \int \frac{\abs{\nabla\paren{f(\tilde \mu^N_t)e_k}}^{2}}{\gamma+\tilde \mu^N_t}}dt + dM_t,
\end{align*}
where $M$ denotes the local martingale term. We estimate the It\^o correction term for $\kappa>0$ (applying Young's inequality in the same way as in the proof of the comparison principle) by
\begin{align*}
\int\frac{\abs{ \nabla(f(\tilde \mu^N_t)e_{k})}^{2}}{\gamma + \tilde \mu^N_t}
	& \leqslant (1+\kappa)\int\frac{\abs{ \nabla(f(\tilde \mu^N_t))e_{k}}^{2}}{\gamma + \tilde \mu^N_t} + (1+\frac{1}{2\kappa})\frac{\abs{ f(\tilde \mu^N_t)\nabla e_{k}}^{2}}{\gamma + \tilde \mu^N_t} \\
	& \leqslant (1+\kappa) \|f'\|_\infty^2 \int\frac{|\nabla \tilde \mu^N_t|^2}{\gamma+\tilde \mu^N_t} + (1+\frac{1}{2\kappa}) \norm[\bigg]{\frac{f}{\sqrt{\cdot}}}_\infty |2\pi k|^2.
\end{align*}
Summing over $k$ and estimating $\|f'\|_\infty^2 \leqslant C/\delta_N$, we obtain for some $K > 0$
\[
	d\paren[\bigg]{\int g_\gamma(\tilde \mu^N_t)} \leqslant \paren[\bigg]{\frac12 \paren[\bigg]{(1+\kappa)\frac{C(2M_N+1)^d}{N\delta_N}-1} \int\frac{|\nabla \tilde \mu^N_t|^2}{\gamma+\tilde \mu^N_t} + (1+\frac{1}{2\kappa})K\frac{M_N^{d+2}}{N} }dt + dM_t,
\]
and by choosing $\kappa>0$ appropriately we can achieve that $\frac12 \paren{(1+\kappa)\frac{C(2M_N+1)^d}{N\delta_N}-1} = -\lambda$. Next, we show that the local martingale is a true martingale: Since $\abs{f(\mu)}^{2}\lesssim \abs{\mu}$, its quadratic variation satisfies
\begin{align*}
\sum_{\abs{k}\leqslant M_{N}}\int_{0}^{t}\E\bigl[\abs[\big]{\bigl\langle\log(\gamma + \tilde \mu^N_s)+1,\nabla(f(\tilde{\mu}^{N}_{s})e_{k})\bigr\rangle}^{2}\bigr]ds & = \sum_{\abs{k}\leqslant M_{N}}\int_{0}^{t}\E\bigg[\abs[\bigg]{\int\frac{(f(\tilde \mu^N_s)e_{k} )\nabla u_{s}}{\gamma + \tilde \mu^N_s}}^{2}\bigg]ds \\
	& \lesssim \sum_{\abs{k}\leqslant M_{N}}\int_{0}^{t}\E[\abs{\nabla \tilde \mu^N_s}^{2}]ds < \infty.
\end{align*}
Together, the above yields the bound
\begin{align*}
\MoveEqLeft
\E\bigg[\int(\gamma+\tilde \mu^N_t)\log(\gamma+\tilde \mu^N_t)\bigg]+\lambda\int_{0}^{t}\E\bigg[\int\frac{\abs{ \nabla \tilde \mu^N_s}^{2}}{\gamma+\tilde \mu^N_s}\bigg]ds\\
	&\lesssim \int(\gamma+\tilde \mu^N_0)\log(\gamma+\tilde \mu^N_0) + \frac{t M_{N}^{d+2}}{N}
\end{align*}
As $[0,\infty)\ni x\to x\log(x)$ is bounded from below, we can apply Fatou's lemma to the left-hand side, and since $\abs{x\log(x)}\leqslant x^{2}+1$ for $x\geqslant 0 $ and $\norm{\tilde{\mu}^{N}_{0}}_{L^{2}}^{2}<\infty$, we can apply the dominated convergence theorem to the right-hand side, such that we obtain for $\gamma\to 0$,
\begin{align*}
\E\bigg[\int \tilde{\mu}^{N}_{t}\log(\tilde{\mu}^{N}_{t})\bigg]+\lambda\int_{0}^{t}\E\bigg[\int\frac{\abs{ \nabla\tilde{\mu}^{N}_{s}}^{2}}{\tilde{\mu}^{N}_{s}}\bigg]ds\lesssim \int \tilde{\mu}^{N}_{0}\log(\tilde{\mu}^{N}_{0}) + \frac{t M_{N}^{d+2}}{N}.
\end{align*}
The claim now follows by taking the supremum in $t \in [0,T]$. 
\end{proof}

\noindent As discussed above, there are good approximations of Dirac masses with moderately large entropy:
\begin{lemma}\label{lem:initial-condition-entropy}
	Let $\rho \in C^\infty_c(\R^d)$ with $\int_{\mathbb{R}^{d}}\rho(y)dy=1$, such that $\rho\geqslant 0$ and $\rho$ is symmetric in the sense that $\rho(x)=\rho(-x)$ for all $x\in\mathbb{R}^{d}$. Define $\rho^N(y) := \sum_{k \in \mathbb Z^d} N^d \rho(N(y+k))$, $y \in \mathbb T^d$. Then $\rho^N \in C^\infty(\mathbb T^d)$ is positive, $\int_{\mathbb T^d} \rho^N(y)dy=1$, and with $\mu^N_0 = \sum_{i=1}^N \delta_{x_i}$ and $\tilde \mu^N_0 := \mu^N \ast \rho^N$ we have
	\[
		\tilde \mu^N_0 \geqslant0,\qquad \|\tilde \mu^N_0 \|_{L^1(\mathbb T^d)} = 1, \qquad \int\tilde\mu_{0}^{N}\log(\tilde\mu_{0}^{N}) \lesssim \log N.
	\]
	Moreover, we have for all $\varphi \in C^2_b(\R^d)$:
	\[
		\abs{\langle \mu^N_0, \varphi\rangle - \langle \tilde \mu^N_0, \varphi\rangle} \lesssim N^{-2}\norm{\varphi}_{C^2_b},
	\]
	where $\norm{\varphi}_{C^2_b}:= \sum_{j=0}^2 \norm{D^j \varphi}_\infty$.
\end{lemma}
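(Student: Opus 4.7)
The plan is to verify the four assertions in the stated order, since each step builds on the previous one. The properties of $\rho^N$ are standard facts about periodized mollifiers: since $\rho\in C_c^\infty(\R^d)$, the defining sum is locally finite, so $\rho^N\in C^\infty(\mathbb T^d)$ and $\rho^N\geqslant 0$; unfolding the periodization and changing variables $w=Nz$ gives
\begin{equation*}
\int_{\mathbb T^d}\rho^N(y)\,dy = \int_{\R^d} N^d \rho(Nz)\,dz = \int_{\R^d}\rho(w)\,dw = 1.
\end{equation*}
Positivity of $\tilde\mu^N_0=\mu^N_0*\rho^N$ and the normalization $\norm{\tilde\mu^N_0}_{L^1}=1$ then follow from Young's inequality together with $\mu^N_0$ being a probability measure (interpreting the statement so that the $1/N$ from the standing definition of the empirical measure is in place).

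For the entropy bound, the key observation is the pointwise upper bound $\norm{\rho^N}_\infty\leqslant C N^d$: since $\rho$ has compact support, at most $O(1)$ lattice translates contribute to $\rho^N(y)$ at any $y\in\mathbb T^d$, and each such term is bounded by $N^d\norm{\rho}_\infty$. Hence $0\leqslant \tilde\mu^N_0\leqslant \norm{\rho^N}_\infty\leqslant C N^d$, and using $t\log t\leqslant t\log(C N^d)$ on $[0,C N^d]$ (with the convention $0\log 0 = 0$) together with $\norm{\tilde\mu^N_0}_{L^1}=1$,
\begin{equation*}
\int \tilde\mu^N_0\log\tilde\mu^N_0 \leqslant \log(C N^d)\cdot \norm{\tilde\mu^N_0}_{L^1} = d\log N + O(1)\lesssim \log N.
\end{equation*}

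For the weak approximation bound I would move the mollification onto the test function. After unfolding the periodic convolution on $\mathbb T^d$ and changing variables $z=N(y-x_i)$,
\begin{equation*}
\langle\tilde\mu^N_0-\mu^N_0,\varphi\rangle = \frac{1}{N}\sum_{i=1}^N \int_{\R^d}\rho(z)\bigl[\varphi(x_i+z/N)-\varphi(x_i)\bigr]\,dz.
\end{equation*}
A second-order Taylor expansion of $\varphi$ around $x_i$ yields three contributions: the zeroth-order term cancels; the first-order term $\frac{1}{N}\nabla\varphi(x_i)\cdot\int z\rho(z)\,dz$ vanishes by the assumed symmetry $\rho(-z)=\rho(z)$; and the quadratic remainder is pointwise bounded by $\frac{1}{2N^2}\norm{D^2\varphi}_\infty\int\abs{z}^2\rho(z)\,dz$, with the $z$-integral finite by compactness of $\supp\rho$. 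Averaging over $i$ preserves the bound and yields $\abs{\langle\tilde\mu^N_0-\mu^N_0,\varphi\rangle}\lesssim N^{-2}\norm{\varphi}_{C^2_b}$.

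The main, and only mild, obstacle is bookkeeping around the periodization: one must carefully match the $\mathbb T^d$-defined $\rho^N$ with the $\R^d$-test function $\varphi$ when unfolding the convolution. Once $\mathbb T^d$ is identified with a fundamental domain and all objects are extended $\mathbb{Z}^d$-periodically, the crucial symmetry-driven cancellation of the linear Taylor term---which is exactly what upgrades the trivial rate $O(N^{-1})$ to $O(N^{-2})$---is immediate, and the remaining pieces are routine.
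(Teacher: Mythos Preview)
Your proof is correct and follows essentially the same route as the paper: both use the pointwise bound $\|\tilde\mu^N_0\|_{L^\infty}\lesssim N^d$ together with $\|\tilde\mu^N_0\|_{L^1}=1$ for the entropy estimate, and both obtain the $N^{-2}$ approximation rate by moving the mollifier onto the test function, Taylor-expanding to second order, and killing the first-order term via the symmetry $\rho(-z)=\rho(z)$. Your observation that the statement should carry the $1/N$ normalization from the standing definition of $\mu^N_0$ is also correct---the paper's own proof uses it.
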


\begin{proof}
	Positivity is obvious because $\tilde \mu^N_0$ is a convolution of positive measures. The statement about the $L^1$ norm follows by Fubini's theorem. For the entropy, we first trivially bound
\begin{align*}
	\norm{\tilde\mu_{0}^{N}}_{L^{\infty}(\mathbb T^d)} = \norm{\rho^{N}\ast\mu_{0}^{N}}_{L^{\infty}(\mathbb T^d)}\leqslant N^{d}\norm{\rho}_{L^{\infty}}\lesssim N^{d}.
\end{align*}
Since $x \log x$ is negative on $[0,1)$, we get together with the trivial $L^\infty$ bound:
\begin{align*}
\int(\tilde\mu_{0}^{N})\log(\tilde\mu_{0}^{N}) &\leqslant\int(\tilde\mu_{0}^{N})\log(\tilde\mu_{0}^{N})\mathbf{1}_{\tilde\mu_{0}^{N}\geqslant 1}
\\&\leqslant \norm{\tilde\mu_{0}^{N}}_{L^{1}}\norm{\log(\tilde\mu_{0}^{N})\mathbf{1}_{\tilde\mu_{0}^{N}\geqslant 1}}_{L^{\infty}}
\\&\leqslant \log(\norm{\tilde\mu_{0}^{N}}_{L^{\infty}})
\\&\lesssim \log(N^{d})
=d\log(N).
\end{align*} 
To compare $\langle \mu^N_0, \varphi\rangle$ and $\langle \tilde \mu^N_0, \varphi\rangle$, note that
\begin{align*}
	\abs{\langle \mu^N_0, \varphi\rangle - \langle \tilde \mu^N_0, \varphi\rangle} & = \abs[\bigg]{\frac1N \sum_{i=1}^N \varphi(x_i) - \frac1N \sum_{i=1}^N \sum_{k \in \mathbb Z^d} \int_{\mathbb T^d} N^d \rho(N(x_i+k-y)) \varphi(y)dy} \\
	& = \abs[\bigg]{\frac1N \sum_{i=1}^N (\varphi(x_i) - \psi^n\ast \varphi(x_i))} \leqslant \norm{\varphi - \psi^N\ast \varphi}_\infty,
\end{align*}
where $\psi^N(x) = N^d \rho(Nx)$ for $x \in \R^d$, and where the convolutions in the second line are on $\R^d$ and not on $\mathbb T^d$. Using the symmetry of $\rho$, we get for all $x \in \R^d$
\begin{align*}
\abs{(\varphi - \psi^N\ast \varphi)(x)}&=\abs[\bigg]{\int_{\mathbb{R}^{d}}\rho(Ny)N^{d}\paren[\Big]{\varphi(x)-\nabla \varphi(x) \cdot y -\varphi(y)}dy}
\\&\leqslant \norm{\varphi}_{C^2_b} \abs[\bigg]{\int_{\mathbb{R}^{d}}\rho(Ny)N^{d}|y|^2 dy} = N^{-2}\|\varphi\|_{C^2_b} \int_{\R^d} \rho(y)|y|^2 dy,
\end{align*}
which concludes the proof.
\end{proof}

\noindent Note that the entropy of $\tilde \mu^N_0$ is much smaller than its $L^{2}$ norm, which in general we can only bound by $N^{d}$.

\begin{theorem}\label{thm:error-est}
Let $\mu^{N}$ be the martingale solution of the Dean-Kawasaki equation in the sense of \cref{def:DK} with initial condition $\mu_{0}^{N}:=\frac{1}{N}\sum_{i=1}^{N}\delta_{x^{i}}$. Let $\tilde \mu^N_0$ be as in Lemma~\ref{lem:initial-condition-entropy} and let $f$ and $W^N$ be as in \cref{prop:app_mu_wellposed}. Assume that with the notation of \cref{prop:app_mu_wellposed}
 $\sup_N \frac{C_N(2M_{N}+1)^{d}}{N\delta_N} < 1$. Let $\tilde{\mu}^{N}$ be the solution of the approximate Dean-Kawasaki equation \eqref{def:app_mu_sec4} with initial condition $\tilde{\mu}_{0}^{N}$.\\
Then for any $t>0$, $\varphi\in C^{\infty}(\mathbb{T}^{d})$ and $F(\mu):=\exp(\langle\mu,\varphi\rangle)$ for $\mu\in\mathcal{M}$, the following weak error bound holds:
\begin{align}
	\abs{\E[F(\tilde{\mu}^{N}_{t})]-\E[F(\mu^{N}_{t})]} \lesssim_{\varphi} N^{-2} +\frac{\delta_N}{N}t+\frac{t}{M_{N}^2N}+ \frac{M_{N}^{d}}{N^2}t+ \frac{\log N}{M_{N}^{2}N}.	
\end{align}
For $M_N = \delta_N^{-1/2}$ and $\delta_N \simeq N^{-\frac{1}{d/2+1}}$ (which is the optimal choice under the coercivity condition) we have
\begin{align}\label{eq:weak-error}
\abs{\E[F(\tilde{\mu}^{N}_{t})]-\E[F(\mu^{N}_{t})]}\lesssim_{\varphi} N^{-1-\frac{1}{d/2+1}}(t+\log(N)).
\end{align} 
\end{theorem}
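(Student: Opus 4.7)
The plan is a Laplace/Kolmogorov duality argument in the spirit of~\cite{Konarovskyi2019}. For $F(\mu)=\exp(\angles{\mu,\varphi})$, independence of the particles gives the closed form
\[
\E_{\mu}[F(\mu^{N}_{t})]=U(t,\mu)\assign\exp(\angles{\mu,g_{t}}),\qquad g_{t}\assign N\log P_{t}(e^{\varphi/N}),
\]
where $(P_{t})$ is the heat semigroup on $\mathbb{T}^{d}$. A Hopf--Cole calculation shows that $g_{t}$ solves $\partial_{t}g = \tfrac{1}{2}\Delta g + \tfrac{1}{2N}\abs{\nabla g}^{2}$, and hence $U$ satisfies the Kolmogorov backward equation $\partial_{s}U=\mathcal{L}U$ for
\[
\mathcal{L}U(s,\mu) = U(s,\mu)\paren[\Big]{\tfrac{1}{2}\angles{\mu,\Delta g_{s}} + \tfrac{1}{2N}\angles{\mu,\abs{\nabla g_{s}}^{2}}},
\]
which is the Kolmogorov generator associated with the DK martingale problem \cref{def:DK}; in particular $\E[F(\mu^{N}_{t})]=U(t,\mu^{N}_{0})$. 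A Taylor expansion of $e^{\varphi/N}$ additionally gives bounds $\norm{g_{t}}_{C^{k}}\lesssim_{\varphi,k}1$ uniform in $N,t$, which I use repeatedly below.

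Applying It\^o's formula to $s\mapsto\angles{\tilde{\mu}^{N}_{s},g_{t-s}}$ via the weak formulation of \eqref{def:app_mu_sec4}, followed by the chain rule for $\exp$, yields
\[
\E[F(\tilde{\mu}^{N}_{t})]-\E[F(\mu^{N}_{t})] = \underbrace{U(t,\tilde{\mu}^{N}_{0})-U(t,\mu^{N}_{0})}_{(\mathrm{I})}+\underbrace{\int_{0}^{t}\E[(\tilde{\mathcal{L}}-\mathcal{L})U(t-s,\tilde{\mu}^{N}_{s})]\,ds}_{(\mathrm{II})},
\]
with $\tilde{\mathcal{L}}U(s,\mu)=U(s,\mu)\paren[\big]{\tfrac{1}{2}\angles{\mu,\Delta g_{s}}+\tfrac{1}{2N}\norm{\Pi_{M_{N}}(f(\mu)\nabla g_{s})}_{L^{2}}^{2}}$ and $\Pi_{M_{N}}$ the Fourier projection onto frequencies $\abs{k}\leqslant M_{N}$. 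Adding and subtracting $\norm{f(\mu)\nabla g_{s}}_{L^{2}}^{2}$ in the generator difference gives
\[
(\tilde{\mathcal{L}}-\mathcal{L})U(s,\mu)=\frac{U(s,\mu)}{2N}\paren[\Big]{\int(f(\mu)^{2}-\mu)\abs{\nabla g_{s}}^{2}-\norm{(\mathrm{Id}-\Pi_{M_{N}})(f(\mu)\nabla g_{s})}_{L^{2}}^{2}}.
\]
Term $(\mathrm{I})$ is immediately bounded by $e^{\norm{g_{t}}_{\infty}}\abs{\angles{\tilde{\mu}^{N}_{0}-\mu^{N}_{0},g_{t}}}\lesssim_{\varphi}N^{-2}$ via \cref{lem:initial-condition-entropy}.

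For $(\mathrm{II})$ the first piece of the integrand is $\lesssim\delta_{N}\norm{\nabla g_{s}}_{\infty}^{2}/N$ using $\abs{f(x)^{2}-x}\lesssim\delta_{N}$, contributing $\lesssim_{\varphi}t\delta_{N}/N$. For the Fourier-cutoff piece the low-frequency estimate $\norm{(\mathrm{Id}-\Pi_{M_{N}})h}_{L^{2}}^{2}\leqslant M_{N}^{-2}\norm{\nabla h}_{L^{2}}^{2}$ applied to $h=f(\mu)\nabla g_{s}$, combined with the pointwise bounds $\abs{f'(x)}^{2}\lesssim 1/x$ and $\abs{f(x)}^{2}\lesssim\abs{x}$, yields
\[
\norm{\nabla(f(\mu)\nabla g_{s})}_{L^{2}}^{2}\lesssim\norm{\nabla g_{s}}_{\infty}^{2}\int\frac{\abs{\nabla\mu}^{2}}{\mu}+\norm{\nabla^{2}g_{s}}_{\infty}^{2}\norm{\mu}_{L^{1}}.
\]
Mass conservation (\cref{prop:mass conservation}) gives $\norm{\tilde{\mu}^{N}_{s}}_{L^{1}}=1$, and the entropy estimate (\cref{prop:entropy-est}) together with $\int\tilde{\mu}^{N}_{0}\log\tilde{\mu}^{N}_{0}\lesssim\log N$ (from \cref{lem:initial-condition-entropy}) gives $\int_{0}^{t}\E\int\abs{\nabla\tilde{\mu}^{N}_{s}}^{2}/\tilde{\mu}^{N}_{s}\,ds\lesssim\log N+tM_{N}^{d+2}/N$. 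Summing the four contributions produces the full error bound displayed in the theorem, and balancing the dominant terms under the coercivity constraint $M_{N}^{d}\lesssim N\delta_{N}$ gives $M_{N}=\delta_{N}^{-1/2}$ and $\delta_{N}\simeq N^{-1/(d/2+1)}$, hence \eqref{eq:weak-error}. The decisive and non-obvious step is the pairing $\abs{f'(\mu)}^{2}\abs{\nabla\mu}^{2}\lesssim\abs{\nabla\mu}^{2}/\mu$, which converts the singular gradient-of-$f(\mu)$ term into a Fisher-information-type quantity exactly controlled by the entropy estimate; without this, the only available route through the energy estimate would carry a constant $\lesssim\norm{\tilde{\mu}^{N}_{0}}_{L^{2}}^{2}\lesssim N^{d}$ and destroy the rate.
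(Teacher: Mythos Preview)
Your proposal is correct and follows essentially the same route as the paper: the Cole--Hopf solution $g_{t}=N\log P_{t}(e^{\varphi/N})$ of the Hamilton--Jacobi equation, the Laplace duality $\E[F(\mu^{N}_{t})]=\exp(\angles{\mu^{N}_{0},g_{t}})$, It\^o's formula for $\exp(\angles{\tilde\mu^{N}_{s},g_{t-s}})$, and the decomposition of the drift discrepancy into the square-root error $\int(f(\mu)^{2}-\mu)\abs{\nabla g}^{2}\lesssim\delta_{N}$ and the Fourier-tail term controlled via $\abs{f'(\mu)}^{2}\abs{\nabla\mu}^{2}\lesssim\abs{\nabla\mu}^{2}/\mu$ and the entropy estimate. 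The only minor point you glossed over is the verification that the stochastic integral arising from It\^o's formula is a true martingale (so that its expectation vanishes), which the paper checks explicitly using $\abs{f(x)}^{2}\lesssim\abs{x}$ and mass conservation.
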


\begin{remark}
With the functions $F_k(\mu)=\exp(\langle\mu,\varphi_k\rangle)$ for a suitable dense set $(\varphi_k)_{k \in \N}\subset C^{\infty}(\mathbb T^d)$, we could use \cite[Theorem 3.4.5]{Ethier1986} to construct a metric for the topology of weak convergence of probability measures on $\mathcal{M}$ and then replace the left-hand side of \eqref{eq:weak-error} by the distance of $\tilde{\mu}^{N}_{t}$ and $\mu^{N}_{t}$ in this metric.
\end{remark}

\begin{proof}[Proof of \cref{thm:error-est}]
To prove the weak error bound \eqref{eq:weak-error}, we apply the duality argument of~\cite{Konarovskyi2019}. 
For that purpose, let $v$ solve the Hamilton-Jacobi equation with initial condition $\varphi\in C^{\infty}(\mathbb{T}^{d})$,
\begin{align}\label{eq_for_v}
\partial_{t}v =\frac{1}{2}\Delta v +\frac{1}{2N}\abs{\nabla v}^{2},\quad v_{0} =\varphi. 
\end{align}
The Cole-Hopf transformation $w = e^{\frac1N v}$ solves $\partial_t w = \frac12 \Delta w$, and therefore with the heat kernel $p$ of $\frac12 \Delta$ (on $\mathbb T^d$ with periodic boundary conditions):
\[
	v_t = N\log (p_t \ast e^{\frac1N \varphi}). 
\]
By the maximum principle we have $v_t \in [\min_{x \in \mathbb T^d} \varphi(x), \max_{x \in \mathbb T^d} \varphi(x)]$ for all $t\geqslant 0$, and by the explicit representation of the solution we get
\[
	|\partial_i v_t| = \abs[\bigg]{N\frac{p_t \ast \paren[\big]{\frac1N \partial_i \varphi e^{\frac1N\varphi}}}{p_t \ast e^{\frac1N \varphi}}} \lesssim_\varphi 1
\]
uniformly in $N$ and $t$, and by a similar application of the chain rule also $|\partial_{ij} v_t|\lesssim_\varphi 1$ uniformly in $N$ and $t$, so overall $\sup_{t\geqslant 0, N\in \N} \norm{v}_{C^2_b} \lesssim_\varphi 1$. 

\noindent By definition, we have $F(\mu^{N}_{t})=\exp(\langle\mu^{N}_{t},v_{t-t}\rangle)$ and with the Laplace duality of \cite[Theorem 2.3]{Konarovskyi2019} we obtain
\[
	\E[F(\mu^{N}_{t})] = \exp(\langle \mu_0^N, v_t\rangle).
\]

For $\tilde{\mu}^{N}$ we have on $[0,t]$
\begin{align} \label{eq:tilde-mu-eq}
d\langle \tilde{\mu}_{s}^{N},v_{t-s}\rangle = \langle\tilde{\mu}_{s}^{N},\paren[\big]{-\partial_s + \frac{1}{2}\Delta}v_{t-s}\rangle ds - \frac{1}{\sqrt{N}}\sum_{\abs{k}\leqslant M_{N}} \langle f(\tilde{\mu}_{s}^{N})e_{k}, \nabla v_{t-s}\rangle dB^{k}_{s},
\end{align}
and using equation \eqref{eq_for_v} for $v$, we arrive at
\begin{align}\label{eq:mart2}
d\paren[\big]{\exp(\langle\tilde{\mu}^{N}_{s},v_{t-s}\rangle)}_{s} &=-\frac{1}{\sqrt{N}}\sum_{\abs{k}\leqslant M_{N}}\exp(\langle\tilde{\mu}^{N}_{s},v_{t-s}\rangle)\langle f(\tilde{\mu}_{s}^{N})e_{k},\nabla v_{t-s} \rangle dB_{s}^{k}\\
&\quad+\frac{1}{2N}\exp(\langle\tilde{\mu}^{N}_{s},v_{t-s}\rangle)\paren[\bigg]{\sum_{\abs{k}\leqslant M_{N}}\!\!\abs{\langle f(\tilde{\mu}^{N}_{s})e_{k},\nabla v_{t-s}\rangle}^{2}-\langle\tilde{\mu}^{N}_{s},\abs{\nabla v_{t-s}}^{2}\rangle}ds.\nonumber
\end{align}  
Note that the stochastic integral on the right hand side is indeed a martingale (not only a local martingale). This can be concluded from the bounds $\exp\paren{\langle \tilde\mu^N_s, v_{t-s}\rangle}\lesssim 1$ (by mass conservation of $\tilde \mu^N$ and the uniform bounds for $v$), $|\nabla v_{t-s}|\lesssim 1$ and $f(u)^{2}\lesssim u$, which yield
\begin{align*}
\sum_{\abs{k}\leqslant M_{N}}\int_{0}^{t}\E[\exp(\langle\tilde{\mu}^{N}_{s},v_{t-s}\rangle)\abs{\langle f(\tilde{\mu}^{N}_{s})e_{k}, \nabla v_{t-s}\rangle}^{2}]ds & \leqslant \int_{0}^{t}\E[\norm{f(\tilde{\mu}^{N}_{s}) \nabla v_{t-s}}_{L^2}^{2}]ds \\
	& \lesssim \int_{0}^{t}\E[\norm{\tilde{\mu}^{N}_{s}}_{L^{1}}]ds = t.
\end{align*} 
Writing~\eqref{eq:mart2} in integral form and taking the expectation, we thus obtain the bound
\begin{align}
&\abs{\E[F(\tilde{\mu}^{N}_{t})]-\E[F(\mu^{N}_{t})]} \nonumber\\
&\leqslant \abs{\exp(\langle\tilde \mu_{0}^{N},v_{t}\rangle) - \exp(\langle\mu_{0}^{N},v_{t}\rangle)} \nonumber\\
&\quad + \frac{1}{2N}\E\bigg[\int_{0}^{t}\exp(\langle\tilde{\mu}^{N}_{s},v_{t-s}\rangle)\abs[\bigg]{\sum_{\abs{k}\leqslant M_{N}} \!\!\! \abs{\langle f(\tilde{\mu}^{N}_{s})e_{k},\nabla v_{t-s}\rangle}^{2}-\langle\tilde{\mu}^{N}_{s},\abs{\nabla v_{t-s}}^{2}\rangle}ds\bigg]. \label{bracketTerm}
\end{align} 
Since $\exp(\langle\tilde{\mu}^{N}_{s},v_{t-s}\rangle) \lesssim 1$, we are left with estimating the term in the brackets in (\ref{bracketTerm}). 
To that aim, we write
\begin{align*}
\frac{1}{2N}\paren[\bigg]{\sum_{\abs{k}\leqslant M_{N}}\abs{\langle f(\tilde{\mu}^{N}_{s})e_{k},\nabla v_{t-s}\rangle}^{2}-\langle\tilde{\mu}^{N}_{s},\abs{\nabla v_{t-s}}^{2}\rangle} = \frac{A_{s}+C_{s}}{2N},
\end{align*}
where 
\begin{equation*}
A_{s} :=  \langle f(\tilde{\mu}_{s}^N)^2, |\nabla v_{t-s}|^2 \rangle -  \langle \tilde{\mu}_{s}^N , |\nabla v_{t-s}|^2 \rangle
\end{equation*}
and
\begin{align*}
C_{s}  :=&  \sum_{\abs{k}\leqslant M_{N}}\abs{\langle f(\tilde{\mu}^{N}_{s})e_{k},\nabla v_{t-s}\rangle}^{2} - \langle f(\tilde{\mu}_{s}^N)^2, |\nabla v_{t-s}|^2 \rangle \\
	 = &\sum_{\abs{k}\leqslant M_{N}}\abs{\langle f(\tilde{\mu}^{N}_{s})\nabla v_{t-s},e_{k}\rangle}^{2} - \norm{f(\tilde \mu^N_s) \nabla v_{t-s}}_{L^2}^2.
\end{align*}
To estimate $A$, we use positivity of $\tilde{\mu}^{N}$ from \cref{cor:positivity}, as well as \eqref{eq:f-bounds_sec4}, such that $\abs{x-f(x)^{2}}\lesssim\delta_N$ for $x\geqslant 0$:
\begin{align}\label{Aestimate}
\abs{A_{s}}=\abs{\langle\tilde{\mu}^{N}_{s},\abs{\nabla v_{t-s}}^{2}\rangle-\langle f(\tilde{\mu}^{N}_{s})^{2},\abs{\nabla v_{t-s}}^{2}\rangle}\lesssim\delta_N\int\abs{\nabla v_{t-s}}^{2}(x)dx \lesssim \delta_N.
\end{align}
In order to estimate the term $C_s$, we apply Parseval's identity and obtain with $g_s:=f(\tilde{\mu}^{N}_{s})\nabla v_{t-s}$
\begin{align*}
\abs{C_{s}}&=\sum_{\abs{k}> M_{N}}\abs{\langle g_s,e_{k}\rangle}^{2} \leqslant M_{N}^{-2}\sum_{\abs{m}>M_{N}}\abs{m}^{2}\abs{\hat{g}_s(m)}^{2} \leqslant M_{N}^{-2}\norm{\nabla g_s}_{L^2}^{2}.
\end{align*}
Now we get from the chain rule
\[
	\norm{\nabla g_s}_{L^2}^2\lesssim \norm{f(\tilde \mu^N_s)}_{L^2}^2 \norm{v_{t-s}}_{C^2_b}^2 + \norm{f'(\tilde \mu^N_s) \nabla \mu^N_s}_{L^2}^2 \norm{v_{t-s}}_{C^1_b}^2 \lesssim \norm{v_{t-s}}_{C^2_b} \paren[\bigg]{\norm{\tilde \mu^N_s}_{L^1} + \int \frac{\abs{\nabla \tilde \mu^N_s}^2}{\tilde \mu^N_s}},
\]
where we used that 
$f(x)^{2}\lesssim \abs{x}$ and $f'(x)^{2}\lesssim\frac{1}{x}$ for $x>0$ by \eqref{eq:f-prime-bounds_sec4}. Integrating over time, using the entropy estimates from \cref{prop:entropy-est} to bound the second term and conservation of mass, $\norm{\tilde{\mu}^{N}_{s}}_{L^{1}}=1$, for the first term (here we need the uniform-in-$N$ coercivity bound to not pick up some $\lambda_N^{-1}$ in the estimates), we obtain
\begin{align}\label{Cestimate}
\int_{0}^{t}\E[\abs{C_{s}}]ds\lesssim_{\varphi} \frac{1}{M_{N}^2}\paren[\bigg]{t+\frac{tM_{N}^{d+2}}{N}+\int\tilde{\mu}_{0}^{N} \log(\tilde{\mu}_{0}^{N})} \lesssim \frac{1}{M_{N}^2} \paren[\bigg]{t+\frac{tM_{N}^{d+2}}{N}+ \log N},
\end{align}
where the last bound holds by Lemma~\ref{lem:initial-condition-entropy}. We estimate the error from the initial condition with the mean value theorem as follows:
\begin{align}\label{ICestimate}
\abs{\exp(\langle\tilde{\mu}_{0}^{N},v_{t}\rangle) - \exp(\langle\mu_{0}^{N},v_{t}\rangle)} \leqslant \abs{\langle\tilde{\mu}_{0}^{N}-\mu_{0}^{N},v_{t}\rangle}\exp(\norm{v_{t}}_{\infty})\lesssim_\varphi N^{-2},
\end{align}
where the last bound holds again by Lemma~\ref{lem:initial-condition-entropy}. Plugging \eqref{Aestimate}--\eqref{ICestimate} into \eqref{bracketTerm}, we obtain for the weak error:
\begin{align*}
	\abs{\E[F(\tilde{\mu}^{N}_{t})]-\E[F(\mu^{N}_{t})]} \lesssim_{\varphi} N^{-2} +\frac{\delta_N}{N}t+\frac{t}{M_{N}^2N}+ \frac{M_{N}^{d}}{N^2}t+ \frac{\log N}{M_{N}^{2}N}.
\end{align*}
The coercivity assumption dictates $M_{N}^{d}\lesssim\delta_N N$. Hence, if we estimate $M_{N}^{d}$ by $\delta_N N$ in the fourth term, we get
\begin{align}
\abs{\E[F(\tilde{\mu}^{N}_{t})]-\E[F(\mu^{N}_{t})]} \lesssim_{\varphi} N^{-2}+\frac{\delta_N}{N}t+\frac{t}{M_{N}^2 N}+\frac{\log N}{M_{N}^2N}.
\end{align}
Choosing $M_{N}=\delta_N^{-1/2}$ and respecting the coercivity assumption, we arrive at the optimal choice $\delta_N \simeq N^{-\frac{1}{d/2+1}}$. Altogether, we obtain the estimate
\begin{align*}
\abs{\E[F(\tilde{\mu}^{N}_{t})]-\E[F(\mu^{N}_{t})]} \lesssim_{\varphi}N^{-2}+N^{-1-\frac{1}{d/2+1}}(t+\log N) \lesssim N^{-1-\frac{1}{d/2+1}}(t+\log N).
\end{align*}

\end{proof}
\end{section}
\begin{section}*{Acknowledgements}
We gratefully acknowledge funding by Deutsche Forschungsgemeinschaft (DFG) under Germany’s Excellence Strategy – The Berlin Mathematics Research
Center MATH+ (EXC-2046/1, project ID: 390685689) and through the grant CRC
1114 ``Scaling Cascades in Complex Systems”, Project Number 235221301, Project C10 ``Numerical analysis for nonlinear SPDE models of particle systems".\\
We thank Xiaohao Ji for carefully reading a preliminary version of the manuscript.
\end{section}
\bibliographystyle{alpha}
\bibliography{Literatur}
\end{document}